\def\r{\mathcal R}
\def\R{\mathbb R}
\def\s{\mathbb S}
\def\C{\mathbb C}
\def\F{\mathrm F}
\def\H{\mathbb H}
\def\L{{\rm L}}
\def\z{{\bf z}}
\def\w{{\bf w}}
\def\PSp{ {\mathrm{ PSp}}}
\def\V{\mathrm V}
\def\U{{\rm U}}
\def\X{\mathbb X}
\def\A{\mathbb A}
\def \ch{{\bf H}_{\C}}
\def \h{{\bf H}_{\H}}
\def \hh{{\bf H}_{\H}}
\def\R{\mathbb R}
\def\p{\mathcal P}
\newcommand{\SL}{\mathrm{SL}}
\newcommand{\SU}{\mathrm{SU}}
\newcommand{\GL}{\mathrm{GL}}
\newcommand{\Sp}{\mathrm{Sp}}
\def\P{\mathbb P}
\def\M{{\mathrm M}}
\def \tr {{\rm tr}_{\R}}
\def \a {{\bf a}}
\def \r  {{\bf r}}
\def \x {{\bf x}}
\def \p {{\bf p}}
\def \q  {{\bf q}}
\def \i {{\bf i}}
\def \j {{\bf j}}
\def \k {{\bf k}}
\def \Tr {{\rm Tr}}
\newcommand{\fX}{\mathfrak{X}}
\newcommand{\hm}{{\mathrm{Hom}}}
\newcommand{\fD}{\mathfrak{X_{s} }}
\newtheorem{theorem}{Theorem}[section]
\newtheorem{lemma}[theorem]{Lemma}
\newtheorem{prop}[theorem]{Proposition}
\theoremstyle{definition}
\newtheorem{definition}[theorem]{Definition}
\theoremstyle{remark}
\newtheorem{remark}[theorem]{Remark}
\numberwithin{equation}{section}
\theoremstyle{plain}
\newtheorem{corollary}[theorem]{Corollary}
\newtheorem*{ack}{Acknowledgement}
\numberwithin{equation}{section}
\newcommand{\secref}[1]{Section~\ref{#1}}
\newcommand{\thmref}[1]{Theorem~\ref{#1}}
\newcommand{\lemref}[1]{Lemma~\ref{#1}}
\newcommand{\remref}[1]{Remark~\ref{#1}}
\newcommand{\propref}[1]{Proposition~\ref{#1}}
\newcommand{\eqnref}[1]{~{\textrm(\ref{#1})}}
\begin{document}
\title[On Conjugation orbits of semisimple  pairs in rank one]{On Conjugation orbits of semisimple  pairs in rank one}
\author[Krishnendu Gongopadhyay  \and Sagar B. Kalane]{Krishnendu Gongopadhyay \and
 Sagar B. Kalane}
\address{Indian Institute of Science Education and Research (IISER) Mohali,
 Knowledge City,  Sector 81, S.A.S. Nagar 140306, Punjab, India}
\email{krishnendug@gmail.com, krishnendu@iisermohali.ac.in}
\address{Indian Institute of Science Education and Research (IISER) Mohali,
 Knowledge City,  Sector 81, S.A.S. Nagar 140306, Punjab, India}
\email{sagark327@gmail.com}
 \subjclass[2010]{Primary 37C15; Secondary 51M10, 20H10,  15B33. }
\keywords{hyperbolic space, quaternions, character variety, semisimple isometries, configuration space of points}
\date{\today}
\begin{abstract}
We consider the Lie groups $\SU(n,1)$ and $\Sp(n,1)$ that act as isometries of the complex and the quaternionic hyperbolic spaces respectively. We classify pairs of semisimple elements in $\Sp(n,1)$ and $\SU(n,1)$ up to conjugacy.  This gives local parametrization of the representations $\rho$ in $\hm(\F_2, G)/G$ such that both $\rho(x)$ and $\rho(y)$ are semisimple elements in $G$, where $\F_2=\langle x, y\rangle$, $G=\Sp(n,1)$ or $\SU(n,1)$. We use the 
 $\PSp(n,1)$-configuration space $\M(n,i,m-i)$ of ordered $m$-tuples of distinct points in $\overline{\h^n}$, where the first $i$ points in an $m$-tuple are boundary points,  to classify the semisimple pairs.  Further, we also classify points on $\M(n,i,m-i)$. 
\end{abstract}
\maketitle

\section{Introduction}
The rank one symmetric spaces of non-compact types are known to be the real, complex, quaternionic hyperbolic spaces and the Cayley plane.  Classifying pairs of elements up to conjugacy in the isometry group of a rank one symmetric space of non-compact type is a problem of potential interest. This problem is related to the understanding of geometric structures in rank one, as well as invariant theory of linear transformations. In this paper, we shall mostly consider the Lie groups $\SU(n,1)$ and $\Sp(n,1)$ that act as isometries of the complex and the quaternionic hyperbolic spaces respectively. Our aim is to classify pairs of semisimple elements in these groups up to conjugation action. Let $\F_2=\langle x, y \rangle$ be the free group generated by two elements $x$ and $y$,  and let $G$ be a Lie group. The conjugation orbit space is the space $\fX(\F_2, G)=\hm(\F_2, G)/G$, where $G$ acts by conjugation on $\hm(\F_2, G)$. This space is often called the deformation space or the character variety. The subset of this space consisting of the representations $\rho$ such that $\rho(x)$ and $\rho(y)$ are semisimple, is denoted by $\fD(\F_2, G)$. For  $G=\SL(2, \R)$ or $\SL(2, \C)$,   it is well-known from the work of Fricke and Vogt that the group generated by a pair of elements is completely classified up to conjugacy by the traces of the generators and the trace of their product, see \cite{gold2}. The respective  deformation spaces have seen much attention due to their connection with the Teichm\"uller theory and Thurston's vision on Kleinian groups.

 It is not an easy problem to generalize the work of Fricke and Vogt to higher dimensions, or in other rank one isometry groups. A starting point to do this could be the simpler problem that asks for classifying pairs up to diagonal conjugation action. Using invariant theory, there are attempts to classify conjugation orbits of pairs of elements in $\SL(n, \C)$ using polynomials involving traces. The work of Procesi \cite{procesi} has given a set of trace coordinates for classifying conjugation orbits of free group representations into $\GL(n, \C)$. Procesi's coordinate system can be restricted to $\SL(n,\C)$, but a minimal family of such coordinates is known only for lower values of $n$, see \cite{law, law2, do}. Since  $\SU(n,1)$ is a real form of $\SL(n+1, \C)$, the pairs of elements in $\SU(n,1)$ may be associated to certain trace parameters, though may not be minimal. Using the work of Lawton \cite{law}, Will \cite{will2} has obtained a set of minimal trace parameters to classify conjugation orbits of $\F_2$ representations in $\SU(2,1)$, also see Parker \cite{pat}. In an attempt to generalize this work,  Gongopadhyay and Lawton \cite{gl} have classified the polystable pairs (that is, the pairs whose conjugation orbits are closed) in $\SU(3,1)$ using 39 real parameters. At the same time, it has been shown that the real dimension of the \hbox{smallest}  possible system of such real parameters to determine any polystable pair is 30. As evident from \cite{gl}, the complexity of the trace parameters increases with  $n$. An explicit set of trace parameters for pairs in $\SU(n,1)$, $n \geq 4$, is still missing in the literature. 

 Using geometric methods, there are attempts to classify `geometric' pairs in rank one isometry groups, mostly $\SU(n,1)$. Recall that an isometry of the complex or the quaternionic hyperbolic space is called \emph{hyperbolic} if it fixes exactly two points on the boundary.  Parker and Platis \cite{pp},  Falbel \cite{falbel} and Cunha and Gusevskii \cite{cugu2}, independently obtained classifications of the hyperbolic pairs in $\SU(2,1)$. A common idea in these works is to 
associate the congruence classes of fixed points of the hyperbolic pairs to a topological space.  It follows from these works that the traces of the hyperbolic elements along with a point on the respective topological spaces classify the hyperbolic pairs. Parker and Platis applied their result to construct Fenchel-Nielsen parameters on the complex hyperbolic quasi-Fuchsian space. Falbel and Platis \cite{fap} obtained geometric structures of the space constructed by Falbel. In \cite{gp2}, Gongopadhyay and Parsad  have generalized the work of  Parker and Platis to classify generic hyperbolic pairs in $\SU(3,1)$, and then to obtain Fenchel-Nielsen type coordinates on a special component of the $\SU(3,1)$ deformation space of surface group representations.  Recently,  Gongopadhyay and Parsad have given a geometric classification of the conjugation orbits of the hyperbolic pairs in $\SU(n,1)$.   An advantage of the approach in \cite{gp3}  is that the complexity for larger $n$ can be handled successfully to provide a classification in arbitrary dimension.

\medskip Let $\Sp(n,1)$ be the isometry group of the quaternionic hyperbolic space $\h^n$.
In this paper we ask for a classification, up to conjugacy,  of pairs of semisimple elements in $\Sp(n,1)$. In other words, we want to classify elements in the space $\fD(\F_2, \Sp(n,1))$. 
Not much is known about the local structure of this space.  A key obstruction in generalizing the above-mentioned works to pairs in $\Sp(n,1)$ is the lack of conjugacy invariants due to the non-commutativity of the quaternions. Because of this, neither the classical invariant theoretic approach nor the geometric approach has a straight-forward generalization for elements in $\fD(\F_2, \Sp(n,1))$. In this paper, we resolve this difficulty by associating certain spatial invariants, along with linear algebraic invariants available in this setup. This gives us a local parametrization of the space $\fD(\F_2, \Sp(n,1))$. 

 In \cite{gk1}, the authors obtained a parametrization of the $\Sp(2,1)$ conjugation orbits of the pairs of hyperbolic elements of $\h^2$, and applied it to obtain Fenchel-Nielsen type parameters for generic representations of surface groups into $\Sp(2,1)$. The main idea in \cite{gk1} was to handle the eigenvalue classes in $\Sp(2,1)$ and the main result followed from a foliation of the eigenspaces by copies of $\C \P^1$. A point on such a $\C \P^1$ has been called a `projective point'.  It is proved in \cite{gk1} that a pair of hyperbolic elements in $\Sp(2,1)$ is completely determined, up to $\Sp(2,1)$ conjugacy,  by the `real traces', the similarity classes of quaternionic cross ratios, the angular invariants of the fixed points and the projective points.  
 Our main result in this paper generalizes this result in \cite{gk1}, as well as the main result in \cite{gp3}, to classify of a point in $\fD(\F_2, \Sp(n,1)$. As a byproduct, we classify points in $\fD(\F_2, \SU(n,1)$. This gives a system of  local parameters to points in $\fD(\F_2, G)$, where $G=\Sp(n,1)$ or $\SU(n,1)$.  We briefly mention  the key ideas behind our main result that generalizes the above work. 

\medskip Let $\M(n,i,m-i)$ be the space of $\PSp(n,1)$-congruence classes of ordered $m$-tuples of distinct points on $\overline \h^n$, where the first $i$ elements in the $m$-tuples belong to $\partial \h^n$,  and the remaining others are from $\h^n$. Given an element $g$ in $\Sp(n,1)$, it has a representation $g_{\C}$ in $\GL(2n+2, \C)$. The coefficients of the characteristic polynomials of $g_{\C}$ are certain conjugacy invariants of $g$ and the collection of such coefficients is called the  \emph{real trace} of $g$. Along with these real traces, another idea in our approach is to associate tuple of points on $\overline \h^n=\h^n \cup \partial \h^n$ to a semisimple pair $(\rho(x), \rho(y))$, and then project the $\Sp(n,1)$-conjugation orbit of the pair to the moduli space $\M(n,i,m-i)$. Intuitively, the semisimple pairs are seen here as  equivalence classes of `moving frames'.  To each pair, we associate points coming from the closure of the totally geodesic quaternionic lines given by these `frames'.  This association is not well-defined. However, given  a semisimple pair, the orbit of such points under the group action induced by the change of eigenframes gives a well-defined association, and we denote
the space of such orbits by  $\mathcal{QL}_n$. A point on this space that corresponds to a given semisimple pair is called the \emph{canonical orbit} of the pair. This space has a topological structure that comes from the topological structure of the moduli space $\M(n,i,m-i)$. 

 However, the canonical orbits along with the real traces do not give the complete set of invariants that classify the pairs. To complete the classification, we have generalized the main idea of `projective points'  used in \cite{gk1}.  This associates certain spatial invariants to the semisimple  pairs and is a crucial ingredient in the classification.   Let $T$ be a semisimple element in $\Sp(n,1)$.  Let $\lambda \in \H \setminus\R$ be a chosen eigenvalue representative in the similarity class of eigenvalues $[\lambda]$ of $T$ with multiplicity $m$, $m \leq n$, that is, the eigenspace of $[\lambda]$ can be identified with $\H^m$. The $[\lambda]$-eigenspace decomposes into a space of the complex $m$-dimensional subspaces of $\H^m$, that may be identified with the complex  Grassmannian manifold $G_{m, 2m}$.   We call it the \emph{eigenvalue Grassmannian} of $T$ corresponding to the eigenvalue class $[\lambda]$. Each point on this Grassmannian corresponds to an `eigenset' of $[\lambda]$.

\medskip With these notions,  the main result of this paper is the following. 

\begin{theorem}\label{mainth}
Let $(A, B)$ be a semisimple pair in $\Sp(n,1)$ such that $A$ and $B$ do not have a common fixed point. Then a point on the $\Sp(n,1)$ conjugation orbit of $(A, B)$ is determined by the real traces $\tr(A)$, $\tr(B)$, the  canonical orbit of $(A, B)$ on $\mathcal{QL}_n$, and a point on each of the eigenvalue Grassmannians of $A$ and $B$.  
\end{theorem}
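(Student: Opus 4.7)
The plan is to establish the theorem by a stepwise normalization: given two semisimple pairs $(A,B)$ and $(A',B')$ in $\Sp(n,1)$, each without a common fixed point, sharing the same real traces, canonical orbit on $\mathcal{QL}_n$, and Grassmannian points, I would construct a conjugating element in three stages, using one invariant at each stage to reduce the remaining ambiguity. The verification that the listed data are indeed conjugation invariants is immediate from their construction, so the content is the converse.

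First I would fix $A$. Since $\tr(A)=\tr(A')$ and since for semisimple elements the coefficients of the characteristic polynomial of the complex companion $g_{\C} \in \GL(2n+2,\C)$ determine the similarity classes of quaternionic eigenvalues together with their multiplicities, $A$ and $A'$ must be conjugate in $\Sp(n,1)$. Replacing $(A',B')$ by a conjugate, I may assume $A=A'$, so that the problem reduces to finding $g \in Z_{\Sp(n,1)}(A)$ with $gBg^{-1}=B'$.

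Next I would exploit the canonical orbit on $\mathcal{QL}_n$. The pair $(A,B)$ determines a tuple of points in $\overline{\h^n}$, namely the fixed points on $\partial \h^n$ together with centers of the totally geodesic quaternionic lines coming from the higher-multiplicity eigenspaces, and similarly for $(A,B')$. Equality of the canonical orbits means the two tuples are equivalent modulo the $\PSp(n,1)$-action and the eigenframe change group. Because the tuples contain the full fixed-point/line data of $A$, the $\PSp(n,1)$ element realising this equivalence preserves the fixed-point structure of $A$, hence lies in $Z_{\PSp(n,1)}(A)$; lifting to $\Sp(n,1)$ and applying it, I may arrange that $A=A'$ and that $B$, $B'$ share the same fixed-point and eigenspace-center configuration on $\overline{\h^n}$. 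The last residual ambiguity now lies in the subgroup of $Z_{\Sp(n,1)}(A)$ preserving this common geometric configuration, which acts on each eigenspace of $B$ as a quaternionic unitary group commuting with the corresponding eigenvalue. Here the eigenvalue Grassmannians come in: for each similarity class $[\lambda]$ of multiplicity $m$, the point on $G_{m,2m}$ records the complex $m$-dimensional subspace of genuine $\lambda$-eigenvectors for a fixed representative $\lambda \in \H-\R$. Agreement of the Grassmannian points of $B$ and $B'$, together with agreement of their similarity classes forced by $\tr(B)=\tr(B')$, pins down the action of $B$ on each eigenspace, yielding $B=B'$. The argument for the eigenvalue Grassmannians of $A$ is then used symmetrically to ensure that the preliminary conjugation fixing $A$ in the first step was consistent with the Grassmannian data of $A$.

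The hard part, I expect, is the bookkeeping in the second and third steps. One must carefully verify that the $\PSp(n,1)$-element produced by equality of canonical orbits on $\mathcal{QL}_n$ can actually be lifted to $Z_{\Sp(n,1)}(A)$, and that the ambiguity built into the definition of $\mathcal{QL}_n$ through the quotient by the eigenframe change action is precisely compensated by the extra Grassmannian data used subsequently. This compatibility, together with the non-commutativity of $\H$ and the resulting need to distinguish between quaternionic eigenspaces and their underlying complex refinements, is the technical heart of the argument, and is where the framework developed in \cite{gk1} for $\Sp(2,1)$ together with the refined invariants of \cite{gp3} for $\SU(n,1)$ must be combined and extended to the full quaternionic setting.
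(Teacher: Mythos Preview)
Your three-stage plan has a structural gap in step~2. You write that the $\PSp(n,1)$ element coming from equality of canonical orbits ``preserves the fixed-point structure of $A$, hence lies in $Z_{\PSp(n,1)}(A)$.'' Over $\H$ this implication fails: an element $C$ that maps each eigenline (or eigenspace) of $A$ to itself acts on each by some quaternionic unitary, and since quaternions do not commute, $CAC^{-1}\neq A$ in general. This is exactly the content of \propref{lox}: preserving projective fixed points is not enough; one needs the \emph{Grassmannian} points to match before one can conclude commutation. So you cannot land in $Z(A)$ using only the canonical orbit; the Grassmannian data of $A$ is already required at this stage, not ``symmetrically'' afterwards as you suggest. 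Your final paragraph senses this (``distinguish between quaternionic eigenspaces and their underlying complex refinements''), but the staged architecture you propose does not accommodate it.

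The paper avoids this by \emph{not} first normalizing $A=A'$. Instead it takes both pairs $(A,B)$ and $(A',B')$ with their chosen eigenframes, and uses equality of canonical orbits directly (via \lemref{nor}) to produce a single $C\in\Sp(n,1)$ carrying the full eigenframe of $(A,B)$ to that of $(A',B')$, up to block unitaries $U_i\in\Sp(a_i)$, $V_j\in\Sp(b_j)$ acting within each eigenspace. Writing $M=\mathrm{diag}(\lambda,U_1,\ldots,U_k,\bar\lambda^{-1})$ one gets $CAC^{-1}=C_{A'}ME_AM^{-1}C_{A'}^{-1}$, and the equality of Grassmannian points for $A$ is then invoked, via \propref{lox}, to force $ME_A=E_AM$, whence $CAC^{-1}=A'$; the same $C$ works for $B$ by the identical argument with the $V_j$. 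Thus the canonical orbit and the Grassmannian data are used in parallel for $A$ and $B$, not sequentially, and there is no need to pass through the centralizer of either element.
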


An immediate corollary is the following.
\begin{corollary}
Let $\rho$ be an element in $\fD(\F_2, \Sp(n,1))$, $\F_2=\langle x, y \rangle$. Then $\rho$ is determined uniquely by $\tr(\rho(x))$, $\tr(\rho(y))$, the canonical orbit of $(\rho(x), \rho(y))$ on $\mathcal{QL}_n$,  and a point on each of the eigenvalue Grassmannians of $\rho(x)$ and $\rho(y)$. 
\end{corollary}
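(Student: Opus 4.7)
The plan is to derive the corollary directly from Theorem \ref{mainth}; the bulk of the work is therefore already contained there, and what remains is an unpacking of definitions. An element $\rho\in\fD(\F_2,\Sp(n,1))$ is by definition an $\Sp(n,1)$-conjugation class of homomorphisms $\rho\colon\F_2\to\Sp(n,1)$ sending both generators to semisimple elements. Since $\F_2=\langle x,y\rangle$ is free of rank two, evaluation at $(x,y)$ gives a canonical bijection between such classes and simultaneous $\Sp(n,1)$-conjugation orbits of ordered semisimple pairs; determining $\rho$ uniquely inside $\fD(\F_2,\Sp(n,1))$ is thus equivalent to determining $(\rho(x),\rho(y))$ up to simultaneous $\Sp(n,1)$-conjugation.

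Applying Theorem \ref{mainth} with $A=\rho(x)$, $B=\rho(y)$, under the (implicit) generic hypothesis that $A$ and $B$ share no fixed point in $\overline{\h^n}$ — the open stratum of $\fD(\F_2,\Sp(n,1))$ on which the listed parametrisation is well-posed — immediately yields that this orbit is pinned down by $\tr(\rho(x))$, $\tr(\rho(y))$, the canonical orbit of $(\rho(x),\rho(y))$ on $\mathcal{QL}_n$, and a point on each eigenvalue Grassmannian. It remains to check that each of these data is a bona fide function on $\fD(\F_2,\Sp(n,1))$ rather than on the pre-quotient homomorphism variety. Real traces are central invariants of $\Sp(n,1)$ by construction; the canonical orbit was introduced precisely as the change-of-eigenframes quotient of the $\PSp(n,1)$-moduli of the fixed-point tuples, so it is conjugation-invariant by design; and the Grassmannian points descend to the quotient since $\Sp(n,1)$-conjugation acts equivariantly on the decomposition of $\H^{n+1}$ into quaternionic eigenspaces and hence on the attached Grassmannians. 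Combined with the injectivity supplied by Theorem \ref{mainth} on the chosen stratum, this proves the corollary.

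The main obstacle in this plan sits entirely inside Theorem \ref{mainth}, which the corollary invokes as a black box. In the theorem, the delicate step is the use of the canonical orbit on $\mathcal{QL}_n$ to align the quaternionic eigenframes of the two pairs: one must choose representative tuples that are simultaneously $\PSp(n,1)$-congruent, lift this congruence to an $\Sp(n,1)$-conjugation that aligns whole quaternionic eigenspaces (not merely the selected fixed points), and then verify compatibility with the eigenvalue Grassmannian data via the residual action of the centraliser of the aligned eigenspaces. The non-common-fixed-point hypothesis is precisely what rigidifies the joint configuration enough for this chain of compatibilities to be consistent, and the corollary inherits the same restriction on the locus of its validity.
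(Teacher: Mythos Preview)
Your proposal is correct and matches the paper's approach: the paper presents this statement as ``an immediate corollary'' of Theorem~\ref{mainth} with no separate proof, and your argument is precisely the unpacking of that implication via the identification of $\fD(\F_2,\Sp(n,1))$ with conjugation orbits of semisimple pairs. Your explicit remark that the corollary inherits the no-common-fixed-point hypothesis from Theorem~\ref{mainth} is a useful clarification the paper leaves implicit.
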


The methods that we have followed to establish the above results, also carry over to the case of a pair of semisimple elements in $\SU(n,1)$. For elements of $\SU(n,1)$, the underlying hyperbolic space is defined over the complex numbers, and hence there is no ambiguity regarding the conjugacy invariants. The coefficients of the characteristic polynomials serve as well-defined conjugacy invariants for individual elements. Geometric invariants like the cross-ratios are also well-defined. Accordingly, we have the following special case of \thmref{mainth}. This was proved for the hyperbolic pairs in \cite{gp2}. The following is an extension of \cite[Theorem 1.1]{gp2} to semisimple pairs. Here $\Tr(A)$ denote the usual trace of a complex matrix. 

\begin{corollary}
Let $\rho$ be an element in $\fD(\F_2, \SU(n,1))$, $\F_2=\langle x, y \rangle$. Then $\rho$ is determined uniquely by $\Tr(\rho(x)^i)$, $\Tr(\rho(y)^i)$, $1\leq i \leq \lfloor (n+1)/2 \rfloor$, and the  canonical orbit of $(\rho(x), \rho(y))$. 
\end{corollary}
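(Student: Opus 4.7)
The plan is to specialize the argument of Theorem \ref{mainth} to $G=\SU(n,1)$ and to observe that two pieces of data appearing in that theorem either become vacuous or translate into the traces $\Tr(\rho(x)^i)$ and $\Tr(\rho(y)^i)$ of the corollary. The complex case is simpler than the quaternionic one because $\SU(n,1)$ acts on a genuine complex vector space, so no similarity-class ambiguity for eigenvalues arises and the configurations of fixed points live on $\overline{\ch^n}$ rather than $\overline{\h^n}$.

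The first simplification is that the eigenvalue Grassmannians collapse. For a semisimple $T \in \Sp(n,1)$ with quaternionic eigenvalue class $[\lambda]$ of multiplicity $m$ and a chosen representative $\lambda \in \H\setminus\R$, the set of eigensets inside the quaternionic $m$-eigenspace is the complex Grassmannian $G_{m,2m}$. For $T \in \SU(n,1)$ the eigenvalue is already a complex number and the eigenspace is already a complex subspace of the correct dimension, so no further choice is needed and the Grassmannian datum disappears. The second simplification identifies the real trace. An element $A \in \SU(n,1)$ preserves a Hermitian form of signature $(n,1)$ with $\det A = 1$, so its spectrum is invariant under $\lambda \mapsto 1/\bar\lambda$. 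This forces a Hermitian palindromic identity on the characteristic polynomial $\chi_A$, relating its coefficients in positions symmetric about the midpoint. Consequently $\chi_A$ is determined by its top $\lfloor (n+1)/2\rfloor$ nontrivial coefficients, and by Newton's identities these are equivalent data to $\Tr(A^i)$ for $1 \leq i \leq \lfloor (n+1)/2\rfloor$. This is precisely the $\SU(n,1)$-analogue of the real trace $\tr(A)$ appearing in Theorem \ref{mainth}.

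With these identifications in place, the remaining content of Theorem \ref{mainth} says that the canonical orbit of $(\rho(x), \rho(y))$ on $\mathcal{QL}_n$ --- defined using eigenframe-fixed points on $\overline{\ch^n}$ modulo $\PU(n,1)$-congruence and the eigenframe relabelling group --- together with the trace data determines $\rho$ up to $\SU(n,1)$-conjugation. The main obstacle, exactly as in the quaternionic proof, is the step that upgrades a $\PU(n,1)$-congruence of the eigenframes to an $\SU(n,1)$-conjugacy of the actual pair $(A,B)$. In the complex setting this step is cleaner because the Grassmannian-choice ambiguity is absent: once the eigenframes are matched by the congruence realizing the canonical orbit, the conjugating element is uniquely determined on each eigenspace and hence on all of $\C^{n+1}$. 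The argument therefore reduces to running the proof of Theorem \ref{mainth} with the eigenvalue Grassmannians specialized to points and the real trace replaced by the complex trace powers above.
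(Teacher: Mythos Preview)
Your proposal is correct and follows the paper's own approach. The paper does not give a separate proof of this corollary; it simply states (in the paragraph preceding the corollary) that the methods for $\Sp(n,1)$ carry over to $\SU(n,1)$, that over $\C$ there is no eigenvalue ambiguity so the extra Grassmannian datum is unnecessary, and that the coefficients of the characteristic polynomial serve as conjugacy invariants. Your write-up makes these two reductions explicit and adds the justification, not spelled out in the paper, for the bound $\lfloor(n+1)/2\rfloor$: the relation $A^{-1}=H^{-1}A^{\ast}H$ forces the conjugate-palindromic identity $c_{n+1-j}=(-1)^{n+1}\overline{c_j}$ on the characteristic polynomial, so the first $\lfloor(n+1)/2\rfloor$ power traces determine $\chi_A$ via Newton's identities. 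Your observation that in the complex case the block-diagonal change-of-frame matrix $M$ automatically commutes with the scalar blocks of $E_A$ (so no Grassmannian condition is needed) is exactly the mechanism behind the paper's remark that the eigenvalue Grassmannian becomes vacuous.
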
 

\medskip     In the above understanding of the conjugation orbits  of the semisimple pairs,  the moduli space of $\PSp(n,1)$-congruence classes of ordered $m$-tuples of distinct points on $\overline \h^n$, $m \geq 4$, is used. As per the above idea, we project an isometry pair onto this space to associate the spatial invariants. In the second part of this paper, we classify points on this space that provides some understanding of its topology. The problem to obtain configuration space of ordered tuples of points on a topological space is a problem of independent interest. The general problem may be stated as follows: 

  Let $X$ be a topological space and $G$ be a group acting diagonally on the ordered $m$-tuples of points on  $X$: for $p=(p_1, \ldots, p_m)$ in $X^m$, $g$ in $G$, 
$$(g, p) \mapsto (gp_1, gp_2, \ldots, gp_m).$$
In general, this is a difficult problem to understand the  orbit space  $X/G$  under this action. However, there are cases when this can be done using the underlying structure of $X$.  A basic example is the case when $X$ is the circle $\s^1$ and one considers ordered quadruple of points on $\s^1$ under the action of the group $\SL(2, \R)$ that acts by the M\"obius transformations on the circle. In this case, the cross ratios of four points essentially determine the orbit space. When $\X$ is the Riemann sphere and $G$ is the group $\SL(2, \C)$ of the M\"obius transformations, similar result also happens. 

 Let $\ch^n$ denote the $n$-dimensional complex hyperbolic space and $\partial \ch^n$ be its boundary. In this case, $X=\ch^n \cup \partial \ch^n$  and $G=\SU(n,1)$. For $k=3$, this problem is related to the classification of the congruence classes of triangles, and it was solved by Cartan, e.g.  \cite{gold}.  The Cartan's angular invariants determine these classes completely.  Another work along this direction was given by Brehm \cite{Brehm} who associated shape invariants to such triples.  The $\SU(n,1)$-congruence classes of ordered tuples of points on $ \ch^n\cup \partial \ch^n$ was obtained by Hakim and Sandler \cite{has}, also see,  Brehm and Et-Taoui \cite{Brehm2}. However, neither of these works gave a complete picture of the moduli space. Cunha and Gusevskii completely solved this problem for $\SU(n,1)$-congruence classes of points on $\partial \ch^n$ and obtained a clear description of the moduli space in \cite{cugu1}.  Gusevskii et. al. also obtained the moduli space of $\SU(n,1)$-congruence classes of points on $\ch^n$ and on the polar space, see \cite{cugu3}, \cite{Niko}.  There are several works on classifications of the $\SU(2,1)$-congruence classes of quadruples of distinct points on $\partial \ch^2$, see \cite{falbel}, \cite{cugu2}, \cite{pp}. All these works are independent of each other and have used different approaches.  

 The above works motivate  the same problem when $X=\h^n \cup \partial \h^n$ and $G=\Sp(n,1)$. The case $k=3,$ in this case, follow from the work of Apanasov and Kim \cite{ak}, who used angular invariants similarly as in the complex hyperbolic case. Recently, the $\Sp(2,1)$-congruence classes of quadruples of points on $\partial \h^n$ has been classified by Cao \cite{Cao}. This classification has been applied by the authors in \cite{gk1}. There have been several recent works to obtain the moduli space of $\Sp(n,1)$-congruence classes of ordered $k$-tuples on $\partial \h^n$, see \cite{Caos},  \cite{gou}. 

\medskip In this paper, we generalize the above works to classify points on $\M(n,i,m-i)$.  We use the same framework following the work of Brehm and Et-Taoui in \cite{bet}, or H\"ofer \cite{hof}, using Gram matrices.  We associate certain numerical invariants to the congruence classes in order to describe the points on $\M(n,i,m-i)$.  Let $p=(p_1, \ldots, p_m)$ be an ordered $m$-tuple of points on $\overline{\hh^n}$ such that first $i$ elements are from $\partial \h^n$ and the remaining ones from $\h^n$. Let $G(p)=(g_{ij})$ be the Gram matrix associated to $p$. We associate the following invariants to $p$. 
 
\medskip \emph{Cross-ratios}: Given an ordered quadruple of pairwise distinct points $(z_1, z_2, z_3, z_4)$ on $\h^n \cup \partial \h^n$, their Kor\'anyi-Reimann quaternionic cross ratio is defined by
$$\X(z_1, z_2, z_3, z_4)=[z_1,z_2,z_3,z_4]={\langle {\bf z}_3, {\bf z}_1 \rangle \langle {\bf z}_3, \bf z_2 \rangle}^{-1} { \langle {\bf z}_4, {\bf z}_2\rangle \langle   {\bf z}_4, {\bf z}_1 \rangle^{-1}},$$
where, for $i=1,2,3,4$,  ${\bf z}_i$ is a  lift of $z_i$. We associate cross ratios to $p=(p_1, \ldots, p_m)$ as follows: 
$$ \X_{1r}=\X(\p_2, \p_1, \p_3, \p_{r}), ~ \X_{2s}=\X(\p_1, \p_2, \p_3, \p_s), ~ \X_{3s}=\X(\p_1, \p_3, \p_2, \p_s),~\X_{ks}=\X(\p_1, \p_k, \p_2, \p_s),$$
for $ (i+1) \leq r \leq m,~ 4 \leq s \leq m,~ 4 \leq k \leq i, ~k<s$.

\medskip A simple count shows that there are a total $d$ number of cross ratios in the above list, where $d=\frac{i(i-3)}{2}+i(m-i)$ with $i$ is the number of null points in $p$. For simplicity of notation, we shall denote them by $(\X_1, \ldots, \X_d)$ unless otherwise required. 

\medskip \emph{Distance Invariants}:
Let $p_i$ and $p_j$ be two distinct negative points in $\hh^n.$ We define distance invariant $d_{ij}$ by:
$d_{ij} = \dfrac{{\langle \p_j, \p_i\rangle}{\langle \p_i,\p_j\rangle}}{{\langle \p_j,\p_j\rangle}{\langle \p_i,\p_i\rangle}}$. The quantity $d_{ij}$ is $\PSp(n,1)$ invariant and it is independent of the chosen lifts of the points. 

\medskip \emph{Angular invariants}: The quaternionic Cartan's angular invariant associated to a triple $(z_1, z_2, z_3)$ on $\h^n \cup \partial \h^n$ is given by the following, see \cite{ak}, \cite{Cao},\cite{Caos}, $$\A(z_1,~z_2,~z_3)=\arccos\frac{\Re(-\langle \z_1, \z_2, \z_3\rangle)}{|\langle \z_1, \z_2, \z_3\rangle|}. $$
where  $\langle \z_1, \z_2, \z_3\rangle=\langle \z_1, \z_2\rangle\langle \z_2, \z_3\rangle\langle \z_3, \z_1\rangle$. We associate angular invariants to $p$ as: $\A_{ij}=\A(\p_1, \p_i, \p_j)$. 

\medskip  
\emph{Rotation invariants}: If $\A_{ij}$ is  non-zero, we further associate a numerical invariant $u_{ij}$  given by: $u_{ij}=\frac{\Im(g_{ij})}{|\Im(g_{ij})|}$. If $\A_{ij}$ is zero, then we shall assume $u_{ij}=0$. The $\Sp(1)$-conjugacy class of $u_{ij}$ is called the \emph{rotation invariant} of $p$. For simplicity of notation, we shall denote them as $u_0, u_1, u_2, \ldots, u_t$, with the understanding that $u_i$ denotes only non-zero rotation invariant for $1 \leq i \leq t$ and $u_0= u_{23}$.  

\medskip With the above notions, we have the following. 
 
\begin{theorem}\label{cong}  A point $[p]$ in $\M(n,i,m-i)$, $p=(p_1, \ldots, p_m)$, is determined completely by the $\Sp(1)$ congruence class of the $(d+t+1)$-tuple 
$$W=(u_{0},u_{1},\ldots,u_{t}, \X_{1},\ldots, \X_{d}), ~ d=\frac{i(i-3)}{2}+i(m-i), ~m \geq 4, ~t=\frac{(m-i)^2-(m-i)}{2}-l,$$  
the angular invariants $\A_{23}$, 
$\A_{i_1j_1}$ and the distance invariants $d_{i_1j_1}$, for $i<i_1,j_1 \leq m$, where $l$ is the number of zero valued rotation invariants $u_{ij}$.  
\end{theorem}

\medskip We have rotation invariants of the cross ratios 
$$\eta_{i}=\frac{\Im(\X_{i})}{|\Im(\X_{i})|}.$$
If $\X_i$ is a real number for some $i$, we assume $\eta_{i}=0$. 

\medskip   Let $p=(p_1, \ldots, p_m)$ be such that $\A(p_1, p_2, p_3) \neq 0$. Note that the $\Sp(1)$-congruence class of the ordered tuple  $F=(u_{0}, u_{1},\ldots, u_{t}, \X_1,\ldots, \X_{d})$ is associated to the $\Sp(1)$-congruence class of the ordered tuple $\mathfrak n=(u_{0}, u_{1}, \ldots, u_{t}, \eta_1, \ldots, \eta_k)$ of points on $\s^2$, where $k$ is the number of non-real cross ratios. Hence the above theorem can be restated in the following form.
\begin{corollary}\label{config} 
 A point $[ p]$ in $\M(n,i,m-i)$, $p=(p_1, \ldots, p_m)$, is determined completely by the angular invariants, the distance invariants,  and a point on the $\Sp(1)$ configuration space of ordered $(k+t+1)$ tuple of points on $\s^2$, where $k$ is the number of  non-real (similarity classes of) cross ratios, and $t+1$ is the number of non-zero angular invariants. 
\end{corollary}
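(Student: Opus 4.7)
The plan is to derive Corollary~\ref{config} as a direct repackaging of Theorem~\ref{cong} using the quaternionic polar decomposition of the cross-ratios. Theorem~\ref{cong} asserts that the $\PSp(n,1)$-congruence class of $p$ is determined by the angular invariants $\A_{ij}$, the distance invariants $d_{ij}$, and the $\Sp(1)$-congruence class of the combined tuple $F=(u_0,\ldots,u_t,\X_1,\ldots,\X_d)$. The task is to show that this last piece of data is equivalent to a point on the $\Sp(1)$-configuration space of an ordered $(k+t+1)$-tuple on $\s^2$, together with the (individually $\Sp(1)$-fixed) real scalar invariants $\Re(\X_i)$ and $|\Im(\X_i)|$ associated to the cross-ratios.

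The first step is to decompose each cross-ratio via its real and imaginary components. Writing $\X_i = \Re(\X_i) + |\Im(\X_i)|\,\eta_i$ whenever $\X_i$ is non-real, the unit imaginary direction $\eta_i = \Im(\X_i)/|\Im(\X_i)|$ lies in $\s^2$. Under an $\Sp(1)$-conjugation $\mu(\cdot)\mu^{-1}$, the scalars $\Re(\X_i)$ and $|\Im(\X_i)|$ are pointwise fixed, while $\eta_i$ is rotated on $\s^2$; for real $\X_i$ there is nothing to rotate. The rotation invariants $u_0, u_1, \ldots, u_t$ are, by construction, unit imaginary quaternions and hence already elements of $\s^2$.

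The second step is to observe that the $\Sp(1)$-conjugation action on $F$ is diagonal: a single $\mu \in \Sp(1)$ acts simultaneously on every $u_j$ and every $\eta_i$. Consequently, the $\Sp(1)$-orbit of $F$ is determined by the real scalar data $(\Re(\X_i),|\Im(\X_i)|)_{i=1}^d$ together with the $\Sp(1)$-orbit of the ordered tuple $(u_0,\ldots,u_t,\eta_1,\ldots,\eta_k) \in (\s^2)^{k+t+1}$, where only the $k$ non-real cross-ratios contribute a spherical coordinate and only the $t+1$ non-zero rotation invariants survive. This orbit is precisely a point on the claimed $\Sp(1)$-configuration space, and inserting this reformulation into Theorem~\ref{cong} yields Corollary~\ref{config}.

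The main subtlety I anticipate is to keep the bookkeeping of invariants straight: one must verify that the real scalar invariants $\Re(\X_i), |\Im(\X_i)|$ are either subsumed into the list of angular and distance invariants or are implicitly carried along with them, so that no piece of information is silently dropped when passing from the statement of Theorem~\ref{cong} to the corollary. Once this accounting is in place, the proof reduces to the elementary fact that the $\Sp(1)$-conjugation action on a quaternion factors through its action on the unit imaginary vector $\eta$ on $\s^2$.
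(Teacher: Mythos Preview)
Your proposal is correct and follows the same route the paper takes: the paper's entire justification is the single remark preceding the corollary, namely that the $\Sp(1)$-congruence class of $F=(u_0,\ldots,u_t,\X_1,\ldots,\X_d)$ is ``associated to'' the $\Sp(1)$-congruence class of the tuple $\mathfrak n=(u_0,\ldots,u_t,\eta_1,\ldots,\eta_k)$ on $\s^2$, which is exactly your polar-decomposition argument. Your writeup is in fact more careful than the paper's, and the subtlety you flag---that the scalar data $\Re(\X_i)$ and $|\Im(\X_i)|$ must be carried along somewhere---is a genuine loose end in the corollary as the paper states it; the paper does not address it.
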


Since $\Sp(1)$ is isomorphic to $\SU(2)$,  the description of the $\Sp(1)$-configuration space of ordered tuples of points can be obtained from the work \cite{bet}.  Restricting to the special cases of the boundary points and the points on $\h^n$ respectively, the above theorem gives the following. 

\begin{corollary}\label{congn} A point $[ p]$ in $\M(n,m,0)$ , $p=(p_1, \ldots, p_m)$, is determined completely by the $\Sp(1)$ congruence class of the $(d+1)$-tuple $F=(u_{0}, \X_1,\X_2,\ldots, \X_{d})$, $d = \frac{m(m-3)}{2}$, $m \geq 4$,  and the angular invariant $\A_{23}$.
\end{corollary}
\begin{corollary}\label{congh}
A point $[ p]$ in $\M(n,0,m)$ , $p=(p_1, \ldots, p_m)$, is determined completely by the angular invariants, distance invariants and $\Sp(1)$ congruence class of the unit pure quaternions associated to $p$. 
\end{corollary}
Let $\M_c(n,i,m-i)$ denote the $\SU(n,1)$-configuration space of ordered tuples of points on $\overline \ch^n$. As an application of the above theorem, we obtain a classification of points on $\M_c(n,i,m-i)$.  Over the complex numbers, conjugacy invariants like the traces and the cross ratios are well-defined. Accordingly, it is much simpler to classify the points on the space $\M_c(n, i, m-i)$. The following corollary is an extension of Theorem 3.1 of Cunha and Gusevskii in \cite{cugu1}. 

\begin{corollary}
A point $[ p]$ in $\M_c(n,i,m-i)$, $p=(p_1, \ldots, p_m)$, is determined completely by
the complex cross ratios
$\X_1,\ldots, \X_{d}, ~ d=\frac{i(i-3)}{2}+i(m-i), ~m \geq 4,$  
the angular invariants $\A_{23}$, $\A_{i_1j_1}$,  and the distance invariants $d_{i_1j_1}$, for $i<i_1,j_1 \leq m$.
\end{corollary} 

Using these results, it is not hard to obtain an explicit description of $\M(n,i, m-i)$ following similar arguments as in \cite{cugu1} for the complex case and \cite{Caos} in the quaternion case, also see \cite{gou}. We omit the details. 
\medskip 

\subsubsection*{Structure of the paper} In \secref{prel}, we discuss some preliminary results that will be used later on. The crucial notion of the eigenvalue Grassmannian is given in \secref{grass}. In \secref{semi}, we determine when two semisimple elements are equal.  In \secref{canpo}, we discuss the notion of `associated points' of a semisimple isometry.   The association of `canonical orbits' to the conjugacy class of a pair of semisimple elements has been elaborated in   \secref{corbit}. We prove \thmref{mainth} in \secref{pfmth}. Finally, in \secref{moduli} we prove \thmref{cong}. 

\section{Preliminaries}\label{prel} 

\subsection{The Quaternions}Let $\H$ denote the division ring of quaternions. Let  $\H^{\ast}$ denote the multiplicative group  $\H \setminus\{0\}$ of non-zero quaternions. Recall that every element of $\H$ is of the form  $a_{0}+a_{1}\i+a_{2}\j+a_{3}\k$, where $a_{0},a_{1},a_{2},a_{3}\in \R$, and  $ \i,\j,\k$ satisfy relations:  $\i^{2}=\j^{2}=\k^{2}=\i\j\k=-1$. Any $a\in {\H}$ can be uniquely written as  $a=a_{0}+a_{1}\i+a_{2}\j+a_{3}\k$.
We define $\Re(a)=a_{0}$ = the real part of $a$ and $\Im(a)=a_{1}\i+a_{2}\j+a_{3}\k=$ the imaginary part of $a$. Also, define the conjugate of $a$ by $\overline {a}= \Re(a)-\Im(a)$.
The norm of $a$ is $|a|=\sqrt{a_0^{2}+a_1^{2}+a_2^{2}+a_3^{2}}$.
We identify the subfield $\R + \R \i$ with the standard complex plane $\C$.
Two non-zero quaternions ${a,b}$ are said to be \emph{similar} if there exists a non-zero quaternion $ {c}$ such that $ {b=c^{-1}ac}$ and we write it as $ {a\backsim b}$. It is easy to verify that $ {a \backsim b}$ if and only if $ {\Re(a)=\Re(b)}$ and $|a|=|b|$. Thus the similarity class of every quaternion $a$ contains a pair of complex conjugates with absolute-value $|a|$ and real part equal to $\Re( a)$.  The multiplicative group $\H \setminus\{0\}$ is denoted by $\H^{\ast}$.

\begin{lemma}  Every quaternionic element has a polar coordinate representation. \end{lemma}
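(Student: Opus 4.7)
The plan is to mimic the standard polar decomposition of complex numbers, with the role of $\i$ played by a suitably chosen unit pure imaginary quaternion. First I would dispose of the trivial case $a=0$. For a non-zero quaternion $a = a_0 + a_1\i + a_2\j + a_3\k$, I set $r = |a|$ and split $a$ into its real and imaginary parts, writing $a = \Re(a) + \Im(a)$ with $\Im(a) = a_1\i + a_2\j + a_3\k$.

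Next I would treat the case $\Im(a)=0$ separately (then $a=\pm r$ and one writes $a = r(\cos\theta + \mu \sin\theta)$ with $\theta \in \{0,\pi\}$ and $\mu$ any unit pure imaginary of one's choice). Assuming now $\Im(a) \neq 0$, I define the unit pure imaginary quaternion
\[
\mu = \frac{\Im(a)}{|\Im(a)|},
\]
and an angle $\theta \in (0,\pi)$ by $\cos\theta = \Re(a)/r$, $\sin\theta = |\Im(a)|/r$; this is legitimate since $\Re(a)^2 + |\Im(a)|^2 = r^2$ and $\sin\theta > 0$. Then by construction
\[
r(\cos\theta + \mu \sin\theta) = \Re(a) + \frac{\Im(a)}{|\Im(a)|}\, |\Im(a)| = a,
\]
which is the desired polar form.

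The only substantive check is that the chosen $\mu$ behaves like an imaginary unit, i.e.\ $\mu^2 = -1$. This is immediate from the fact that any pure imaginary $\mu = b_1\i + b_2\j + b_3\k$ satisfies $\bar\mu = -\mu$, hence $\mu^2 = -\mu\bar\mu = -|\mu|^2$; for our $\mu$ we have $|\mu|=1$, so $\mu^2 = -1$. There is no real obstacle here: the lemma is essentially a reformulation of the splitting $\H = \R \oplus \Im(\H)$ together with the fact that the unit sphere in $\Im(\H)$ consists of square roots of $-1$. The output of the proof is the representation $a = r(\cos\theta + \mu\sin\theta)$ with $r = |a| \geq 0$, $\theta \in [0,\pi]$, and $\mu$ a unit pure imaginary quaternion, uniquely determined when $\Im(a) \neq 0$.
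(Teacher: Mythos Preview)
Your proof is correct and follows essentially the same approach as the paper: split $a$ into its real and pure imaginary parts, use $|a|^2=\Re(a)^2+|\Im(a)|^2$ to define $\theta\in[0,\pi]$, and take the unit pure imaginary $\mu=\Im(a)/|\Im(a)|$ when $\Im(a)\neq 0$. Your treatment is in fact slightly more careful than the paper's, since you explicitly handle $a=0$, verify $\mu^2=-1$, and correctly allow $\theta\in\{0,\pi\}$ in the real case (the paper only mentions $\theta=0$ there).
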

\begin{proof}
	Let  $a=a_{0}+a_{1}\i+a_{2}\j + a_{3}\k$ be a quaternion as above. We want to write its polar form. We can have $a=a_{0}+ v$ where $v=a_{1}$i$+a_{2}$j$+a_{3}$k. Now observe that ${\lvert{a}\lvert}^2$ =${{a_0}}^2$+${\lvert{v}\lvert}^2$, where ${\lvert{v}\lvert}=\sqrt{{a_{1}}^2+{a_{2}}^2+{a_{3}}^2 }$. So if $a\neq0$  we get  $1 = {\big(\frac{a_0 }{\lvert a \lvert }}\big)^{2} + {\big(\frac{\lvert v \lvert}{\lvert a \lvert }}\big)^{2}$. Now put $\frac{a_0}{\lvert a \lvert}= \cos(\theta)$ and $\frac{\lvert v \lvert}{\lvert a \lvert}= \sin(\theta)$ where $\theta \in [0,\pi]$. If $ v\neq0$ then we have $a=a_{0}+ v = {\lvert a \lvert}\big (\frac{a_0 }{\lvert a \lvert} + {\frac{\lvert v \lvert}{\lvert a \lvert}}{\frac{v}{\lvert v \lvert}}\big)$. Thus we get $a= {\lvert a \lvert}{(\cos(\theta)+ \eta {\sin(\theta)}})$ where $ \eta = \frac{v}{\lvert v \lvert}\hspace{1mm}  and \hspace{1mm} \theta \in [0,\pi]$. If $v=0$ then $\theta=0$.
\end{proof}

\begin{remark} In Lemma 2.1, $x={\cos(\theta)+ \eta {\sin(\theta)}}$ is a unitary quaternion number. Since $x{\overline{x}}=1={\overline{x}}x .$
\end{remark}

\subsubsection*{Commuting quaternions} Two non-real quaternions commute if and only if their imaginary parts are scaled by a real number, see  \cite[Lemma 1.2.2]{cg} for a proof. Let $Z(\lambda)$ denotes the centralizer of $\lambda \in \H \setminus\R$. Then $Z(\lambda)=\R + \R \lambda$. For some non-zero $\alpha \in \H$,  $Z(\lambda)=\alpha \C \alpha^{-1}$, where $\C=Z(re^{\i \theta})$, $r=|\lambda|$, $\Re \lambda=r \cos \theta$. Given a non-real quaternion $q$, we call $Z(q)$ the \emph{complex line} passing through $q$.
 Note that if $\lambda \in \H\setminus\R$, then $Z(\overline{\lambda}^{-1})={Z(\lambda)}$.

\subsection{Matrices over quaternions}  Let $\V$ be an $n$-dimensional  right vector space over $\H$.  Let $T$ be a right linear transformation of $\V$. Then $T$ is represented by an $n \times n$ matrix over $\H$. Invertible linear maps of $\V$ are represented by invertible $n \times n$ quaternionic matrices. The group of all such linear maps is denoted by $\GL(n, \H)$. For more details on linear algebra over quaternions, see \cite{lr}. In the following, we briefly recall the notions that will be used later on.

 \medskip Let $T \in \GL(n, \H)$ and $v \in \V$, $\lambda \in \H^{\ast}$, are such that $T(v)=v \lambda$, then for $\mu \in \H^{\ast}$ we have 
$$T(v \mu)=(v \mu) \mu^{-1} \lambda \mu.$$ Therefore eigenvalues of $T$ occur in similarity classes and if $v$ is a $\lambda$-eigenvector, then $v \mu \in v \H$ is a $\mu^{-1} \lambda \mu$-eigenvector.
The one-dimensional right subspace spanned by $v$ will be called the \emph{eigenline} to $\lambda$.  Thus the eigenvalues are no more conjugacy invariants for $T$, but the similarity classes of eigenvalues are conjugacy invariant.   Note that each similarity class of eigenvalues contains a unique pair of complex conjugate numbers. We shall choose one of these complex eigenvalues $re^{\i \theta}$, $\theta \in [0, \pi]$,  to be the representative of its similarity class. Often we shall refer them as `eigenvalues', though it should be understood that our reference is towards their similarity classes. In places where we need to distinguish between the similarity class and a representative, we shall denote the similarity class of an eigenvalue representative $\lambda$ by $[\lambda]$.
\subsection{Quaternionic hyperbolic space}
Let $\V=\H^{n,1}$ be the $n$-dimensional right vector space over $\H$ equipped with the Hermitian form of signature $(n,1)$ given by $$\langle\z,\w\rangle=\w^{\ast}H\z=\bar w_{n+1}z_{1}+\bar w_2 z_2+\cdots+ \bar w_{n} z_{n}+\bar w_1 z_{n+1},$$
where $\ast$ denotes conjugate transpose. The matrix of the Hermitian form is given by
\begin{center}
$H=\left[ \begin{array}{cccc}
            0 & 0 & 1\\
           0 & I_{n-1} & 0 \\
 1 & 0 & 0\\
          \end{array}\right],$
\end{center}
where $I_{n-1}$ is the identity matrix of rank $n-1$.
We consider the following subspaces of $\H^{n,1}:$
$$\V_{-}=\{\z\in\H^{n,1}:\langle\z,\z \rangle<0\}, ~ \V_+=\{\z\in\H^{n,1}:\langle\z,\z \rangle>0\},$$
$$\V_{0}=\{\z-\{{\bf 0}\}\in\H^{n,1}:\langle\z,\z \rangle=0\}.$$
A vector $\z$ in $\H^{n,1}$ is called \emph{positive, negative}  or \emph{null}  depending on whether $\z$ belongs to $\V_+$,   $\V_-$ or  $\V_0$. Let $\P:\H^{n,1}-\{0\}\longrightarrow  \H \P^n$ be the right projection onto the quaternionic projective space. Image of a vector $\z$ will be denoted by $z$.  The quaternionic hyperbolic space $\h^n$ is defined to be $\P \V_{-}$. The ideal boundary $\partial\h^n$ is defined to be $\P \V_{0}$. So we can write $\h^n=\P(\V_{-})$ as
$$\h^n=\{(w_1,\ldots, w_n)\in\H^n \ : \ 2\Re(w_1)+|w_2|^2+\cdots+|w_n|^2<0\},$$
where for a point $\z=\begin{bmatrix}z_1 & z_2 & \ldots & z_{n+1}\end{bmatrix}^T \in \V_- \cup \V_0$, $w_i=z_i z_{n+1}^{-1}$ for $i=1, \ldots, n$. This is the Siegel domain model of $\h^n$. Similarly one can define the ball model by replacing $H$ with an equivalent Hermitian form $H'$ given by the diagonal matrix: $H'=diag(-1,1, \ldots, 1)$. 
We shall mostly use the Siegel domain model here.

There are two distinguished points in $\V_{0}$ which we denote by  $\bf{o}$ and $\bf\infty,$ given by
$$\bf{o}=\left[\begin{array}{c}
               0\\0\\ \vdots \\ 1\\
              \end{array}\right],
~~ \infty=\left[\begin{array}{c}
               1\\0 \\ \vdots \\0\\
              \end{array}\right].$$\\
Then we can write $\partial\h^n=\P(\V_{0})$ as
$$\partial\h^n-\infty=\{(z_1,\ldots,z_n)\in\H^n:2\Re(z_1)+|z_2|^2+\cdots+|z_n|^2=0\}.$$\\
Note that $\overline{\h^n}=\h^n \cup \partial \h^n$. 

 Given a point $z$ of $\overline{\h^n}-\{\infty\} \subset\H \P^n$ we may lift $z=(z_1,\ldots, z_n)$ to a point $\z$ in $\V$, called the \emph{standard lift} of $z$. It is represented in projective coordinates by
 $$\z=\left[\begin{array}{c}
                z_1\\ \vdots \\ z_n\\1\\
               \end{array}\right].$$
 The Bergman metric in $\h^n$ is defined in terms of the Hermitian form given by:
$${ds}^2=-\frac{4}{\langle \z,\z \rangle^2} \det \left[\begin{array}{cc}
                                                    \langle \z,\z \rangle & \langle d\z ,\z \rangle\\
                                                    \langle \z,d\z \rangle & \langle d\z,d\z \rangle\\
                                                   \end{array}\right].$$
If $z$ and $w$ in $\h^n$ correspond to vectors $\z$ and $\w$ in $\V_{-}$, then the Bergman metric is also given by the distance $\rho$:
$$\cosh^2\bigg(\frac{\rho(z,w)}{2}\bigg)=\frac{\langle\z,\w\rangle \langle\w,\z\rangle}{\langle\z,\z\rangle \langle\w,\w\rangle}.$$

More information on the basic formalism of the quaternionic hyperbolic space may be found in \cite{cg}, \cite{kip}. 

\subsection{Isometries}   \label{ltr}
 Let ${\rm Sp}(n,1)$ be the isometry group of  the Hermitian form $\langle .,.\rangle$. Each matrix $A$ in ${\rm Sp}(n,1)$ satisfies the relation $A^{-1}=
H^{-1}A^{\ast}H$, where $A^{\ast}$ is the conjugate transpose of $A$. The isometry group of  $\h^n$ is the projective unitary group ${\rm PSp}(n,1)={\rm Sp }(n,1)/\{\pm I\}$. However, we shall mostly deal with $\Sp(n,1).$

\medskip Based on their fixed points, isometries of $\h^n$ are classified as follows:
\begin{enumerate}
\item An isometry is \emph{elliptic} if it fixes a point on $\h^n$.
\item An isometry is \emph{parabolic} if it fixes exactly one point on $\partial\h^n$.
\item An isometry is \emph{hyperbolic} if it fixes exactly two points on $\partial\h^n$.
\end{enumerate}
The elliptic and hyperbolic isometries are semisimple. Now we define the following terminology for describing conjugacy classification of semisimple isometries. Let $g$ be a semisimple element in $\Sp(n,1)$. Let $\lambda$ be an eigenvalue of $g$, counted without multiplicities. Then $\lambda$ is called negative, resp. null, resp. positive if the corresponding $\lambda$-eigenvector is negative, resp. null, resp. positive. Accordingly, a similarity class of eigenvalues is negative, null or positive according to its representative is negative, null, or positive, respectively. For details about conjugacy classification of the isometries we refer to Chen-Greenberg \cite{cg}. We note the following fact that is useful for our purposes.

\begin{lemma}\label{hycl}{\rm (Chen-Greenberg )\cite{cg}}
\begin{enumerate} 
\item 
 Two elliptic elements in ${\rm Sp}(n,1)$ are conjugate if and only if they have the same negative class of eigenvalues, and the same positive classes of eigenvalues.

\item Two hyperbolic elements in ${\rm Sp}(n,1)$ are conjugate if and only if they have the same similarity classes of eigenvalues.
\end{enumerate} 
\end{lemma}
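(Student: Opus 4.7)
The ``only if'' direction of both parts is immediate. If $A' = CAC^{-1}$ with $C \in \Sp(n,1)$, then $Av = v\lambda$ yields $(CAC^{-1})(Cv) = (Cv)\lambda$, so the multisets of similarity classes of eigenvalues of $A$ and $A'$ coincide. Because $C$ preserves the Hermitian form, it carries eigenvectors of a given type (negative, positive, or null) to eigenvectors of the same type, which in addition preserves the labeling of classes in part (1).

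For the converse of (1), let $L$ be the negative eigenline of the elliptic element $A$, with representative eigenvalue $\lambda$. Its orthogonal complement $L^\perp$ is $n$-dimensional, positive-definite, and $A$-invariant, and the restriction of $A$ to $L^\perp$ is a quaternionic unitary transformation, which by the spectral theorem diagonalizes in a form-orthonormal eigenbasis. Passing to this basis (the transition is realized by an element of $\Sp(n,1)$) brings $A$ to a diagonal form $\mathrm{diag}(\mu_1,\ldots,\mu_n,\lambda)$ with all entries of unit modulus; a further conjugation by a diagonal element of $\Sp(n,1)$ with unit quaternion entries replaces each $\mu_k$ and $\lambda$ by its complex representative $e^{\i\phi_k}$ (resp.\ $e^{\i\psi}$), with angles in $[0,\pi]$. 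This normal form is determined precisely by the negative class $[\lambda]$ together with the unordered multiset of positive classes $\{[\mu_k]\}$, since any permutation of the $\mu_k$ is realized by a permutation matrix in $\Sp(n) \subset \Sp(n,1)$. Two elliptic elements with matching labeled data therefore reduce to the same normal form and are conjugate.

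For the converse of (2), a hyperbolic $A$ has exactly two fixed null lines in $\partial \h^n$. Using the transitivity of $\Sp(n,1)$ on ordered pairs of distinct null lines, I conjugate so that these lines are spanned by $\infty$ and ${\bf o}$. Preservation of these lines makes the first and last columns of $A$ equal to $(\alpha,0,\ldots,0)^T$ and $(0,\ldots,0,\beta)^T$, and the relation $A^\ast H A = H$ then forces the middle columns to have vanishing first and last entries, yielding $A = \mathrm{diag}(\alpha,M,\beta)$ with $M \in \Sp(n-1)$ and $\beta = \bar\alpha^{-1}$. I next conjugate by $C = \mathrm{diag}(q,V,\bar q^{-1})$ with $q \in \H^\ast$ and $V \in \Sp(n-1)$: such $C$ lies in $\Sp(n,1)$, stays in the stabilizer of $\{\infty,{\bf o}\}$, and produces the new blocks $(q\alpha q^{-1},\,VMV^{-1},\,\bar q^{-1}\beta\bar q)$ with the relation $\beta = \bar\alpha^{-1}$ automatically preserved. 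Choosing $q$ so that $q\alpha q^{-1}$ is the complex representative $re^{\i\theta}$, $\theta\in[0,\pi]$, and $V$ so that $VMV^{-1}$ is diagonal with complex representative entries (by the unitary spectral theorem), one obtains a canonical diagonal form depending only on the multiset of similarity classes of eigenvalues of $A$. A possible application of the Weyl involution in $\Sp(n,1)$ swaps $\infty \leftrightarrow {\bf o}$ and interchanges $[\alpha] \leftrightarrow [\bar\alpha^{-1}]$, resolving the last labeling ambiguity.

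The main technical point is the coupling in the hyperbolic case: the condition $C = \mathrm{diag}(q,V,s) \in \Sp(n,1)$ forces $s = \bar q^{-1}$, so the scalar conjugations of $\alpha$ and of $\beta$ are forced to be linked in exactly the way that preserves the constraint $\beta = \bar\alpha^{-1}$, making the simultaneous reduction of the first and last blocks self-consistent. Once this is noted the two cases run in parallel, and the classification reduces in each case to comparing a completely determined diagonal normal form.
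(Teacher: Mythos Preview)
The paper does not supply its own proof of this lemma: it is quoted as a classical result of Chen--Greenberg \cite{cg} and used as a black box. So there is nothing in the paper to compare your argument against.

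Your proof is correct. A couple of small remarks. In the elliptic case you are implicitly working in a ball-model basis adapted to $L\oplus L^{\perp}$, in which the Hermitian form is diagonal; this is of course harmless for a conjugacy statement, but it is worth saying explicitly since the paper's standing model is the Siegel domain with the anti-diagonal $H$, where a diagonal matrix with arbitrary unit-modulus entries need not lie in $\Sp(n,1)$. In the hyperbolic case your check that the forced relation $s=\bar q^{-1}$ makes the last block normalize to $r^{-1}e^{\i\theta}$ automatically once the first block is normalized to $re^{\i\theta}$ is exactly the point; you might streamline by noting directly that $\overline{(q\alpha q^{-1})}^{-1}=\bar q^{-1}\,\bar\alpha^{-1}\,\bar q$, which is precisely $s\beta s^{-1}$. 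The Weyl-involution remark is not really needed for the statement as written (the lemma only asks for equality of the multiset of similarity classes, and the convention $r>1$, $\theta\in[0,\pi]$ pins down the normal form), but it does no harm.
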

\begin{lemma}\label{emb}
The group $\Sp(n,1)$ can be embedded in the group ${\rm GL}(2n+2, \C)$.
\end{lemma}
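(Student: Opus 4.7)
The plan is to exhibit an explicit faithful representation of $\GL(n+1,\H)$ inside $\GL(2n+2,\C)$ and then restrict it to $\Sp(n,1)$. The construction rests on the decomposition $\H=\C+\C\j$, where $\C=\R+\R\i$, and on the fundamental commutation rule $\j z=\bar z\,\j$ for every $z\in\C$, both of which are already available in the paper.

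First I would represent an arbitrary quaternion $q$ uniquely as $q=z_1+z_2\j$ with $z_1,z_2\in\C$, and extend this entrywise to an $(n+1)\times(n+1)$ quaternionic matrix, writing $A=A_1+A_2\j$ for unique complex $(n+1)\times(n+1)$ matrices $A_1,A_2$. Next I would define
$$\Phi\colon \GL(n+1,\H)\longrightarrow \GL(2n+2,\C),\qquad A\longmapsto A_{\C}:=\begin{pmatrix}A_1 & A_2\\ -\overline{A_2} & \overline{A_1}\end{pmatrix}.$$
The fact that $\Phi$ is a ring homomorphism (and in particular takes products to products) is the computational heart of the argument. Using $\j z=\bar z\,\j$ and $\j^2=-1$, a direct expansion gives
$$(A_1+A_2\j)(B_1+B_2\j)=(A_1B_1-A_2\overline{B_2})+(A_1B_2+A_2\overline{B_1})\j,$$
and the corresponding block product of $A_\C$ and $B_\C$ produces exactly the pair of complex matrices $(A_1B_1-A_2\overline{B_2},\ A_1B_2+A_2\overline{B_1})$ in its top row, with the bottom row being their conjugates in the required pattern. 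Hence $\Phi(AB)=\Phi(A)\Phi(B)$.

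Injectivity is immediate from the construction: if $A_\C=I_{2n+2}$ then $A_1=I_{n+1}$ and $A_2=0$, so $A=I_{n+1}$. Because $\Phi(I)=I$, the image of an invertible quaternionic matrix is invertible, so the map lands in $\GL(2n+2,\C)$. Finally, since $\Sp(n,1)$ is by definition the subgroup of $\GL(n+1,\H)$ preserving the Hermitian form $\langle\cdot,\cdot\rangle$, the restriction $\Phi|_{\Sp(n,1)}$ is an injective group homomorphism into $\GL(2n+2,\C)$, giving the desired embedding.

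The whole argument is routine quaternionic linear algebra; the only mildly delicate point is keeping the block-conjugation conventions straight so that the identity $\Phi(AB)=\Phi(A)\Phi(B)$ comes out correctly, which is why I would verify it by the direct expansion above rather than appeal to abstract principles. No step requires anything beyond the relation $\j z=\bar z\,\j$ and the explicit form of the Hermitian matrix $H$ recalled earlier.
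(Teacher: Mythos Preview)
Your proof is correct and follows essentially the same approach as the paper: both exploit the decomposition $\H=\C\oplus\C\j$ to write a quaternionic matrix as a pair of complex matrices and then assemble the standard $2\times 2$ block complex representation. The only differences are cosmetic---the paper writes $A=A_1+\j A_2$ rather than $A=A_1+A_2\j$, leading to a different but equivalent block arrangement, and the paper simply states the embedding formula without your explicit verification of multiplicativity and injectivity.
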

\begin{proof}
Write $\H=\C\oplus{\bf j}  \C$. For $A\in \Sp(n,1)$, express $A=A_1+{\bf j}A_2$,
{where} $ A_1, A_2\in M_{n+1}(\C)$. This gives an embedding $A \mapsto A_{\C}$ of $\Sp(n,1)$ into ${\rm GL}(2n+2, \C)$, where
\begin{equation}\label{crep} A_{\C}= \left(
                          \begin{array}{cc}
                            A_1 &  -\overline{A_2}\\
                          {A_2}  & \overline{A_1} \\
                          \end{array}
                        \right).
\end{equation}
\end{proof}
The following lemma follows from the above. 
\begin{prop}\label{rt}
Let $A$ be an element in $\Sp(n,1)$. Let $A_{\C}$ be the
corresponding element in ${\rm GL}(2n+2, \C)$. The characteristic polynomial
of $A_{\C}$ is of the form
\begin{equation*}\chi_A(x)=\sum_{j=0}^{2n+2} a_j x^{2(n+1)-j},\end{equation*}
where $a_0=1=a_{2n+2}$ and for $1 \leq j \leq n+1$, $a_j=a_{2(n+1)-j}$. 
If $A$ is hyperbolic, then the conjugacy class of  $A$ is determined by the real numbers $a_j$, $1 \leq j \leq n+1$. If $A$ is elliptic then the conjugacy class of  $A$ is 
determined by the real numbers $a_j$, $1 \leq j \leq n+1$, along with the negative-type eigenvalue of $A$. 
\end{prop}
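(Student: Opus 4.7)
My plan is to derive the structural properties of $\chi_A$ in two separate steps (reality of the $a_j$ and the palindromic symmetry) using the quaternionic and Hermitian structure of $A_{\C}$, then recover the list of quaternionic similarity classes of eigenvalues of $A$ from the complex spectrum, and finally invoke \lemref{hycl} to get the classification.

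For the reality of the $a_j$, I would observe from (\ref{crep}) that conjugation by $J=\bigl(\begin{smallmatrix} 0 & -I \\ I & 0\end{smallmatrix}\bigr)$ sends $A_{\C}$ to its entrywise complex conjugate $\overline{A_{\C}}$; equivalently, if $v\in\H^{n+1}$ satisfies $Av=v\lambda$ for some representative $\lambda\in\C$, then under the identification $\H^{n+1}\cong\C^{2n+2}$ the image of $v$ is a $\lambda$-eigenvector of $A_{\C}$ while the image of $vj$ is a $\bar\lambda$-eigenvector of $A_{\C}$. Hence the spectrum of $A_{\C}$ is closed under complex conjugation and $\chi_A(x)\in\R[x]$. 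For the palindromic relation, I would use the defining identity $A^{-1}=H^{-1}A^{*}H$: complexifying shows that $A_{\C}^{-1}$ is similar (via $H_{\C}$) to $(A^{*})_{\C}=(A_{\C})^{*}$, so the eigenvalues of $A_{\C}$ also form a multiset closed under $\mu\mapsto\bar\mu^{-1}$. Combined with reality, this forces $\{\mu_i\}=\{\mu_i^{-1}\}$, i.e.\ $\chi_A(x)=x^{2n+2}\chi_A(1/x)$, which is exactly the relation $a_j=a_{2(n+1)-j}$. The normalizations $a_0=1$ and $a_{2n+2}=\det A_{\C}=1$ then hold because $A_{\C}$ lies in $\Sp(2n+2,\C)\cap U(2n,2)$.

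Next I would reconstruct the similarity classes. The $a_j$ determine the multiset of complex roots of $\chi_A$, and by the previous paragraph this multiset decomposes canonically into complex-conjugate pairs $\{\lambda,\bar\lambda\}$, each such pair being precisely the $\C$-trace of one quaternionic similarity class $[\lambda]$ of eigenvalues of $A$ counted with multiplicity. Thus $\chi_A$ recovers the full unordered list of similarity classes of eigenvalues of $A$. If $A$ is hyperbolic then the two null classes are distinguished intrinsically by $|\lambda|\neq 1$ and the remaining classes all satisfy $|\lambda|=1$ and are positive, so the conjugacy class is already determined by \lemref{hycl}(2). If $A$ is elliptic, every class satisfies $|\lambda|=1$ and one of them is of negative type; the coefficients alone cannot single out the negative class, but once that datum is supplied, \lemref{hycl}(1) concludes.

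The main care needed is the multiplicity bookkeeping in the dictionary between quaternionic similarity classes of $A$ and complex eigenvalues of $A_{\C}$, particularly at the degenerate values $\lambda\in\R$ and $|\lambda|=1$ where the generic quadruple $\{\lambda,\bar\lambda,\lambda^{-1},\bar\lambda^{-1}\}$ collapses; checking that the complex multiplicity of each $\lambda$ in the spectrum of $A_{\C}$ is always exactly twice the quaternionic multiplicity of $[\lambda]$ (or once when $\lambda=\bar\lambda=\pm 1$) is where the argument has to be done with care so that the reconstruction of similarity classes from $\chi_A$ is unambiguous.
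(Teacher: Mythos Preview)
Your argument is correct and supplies exactly the details the paper omits: the paper simply asserts that \propref{rt} ``follows from the above'' (i.e.\ from the embedding of \lemref{emb}) and gives no further proof, so there is nothing to compare against beyond noting that you are filling in the same route via $A_{\C}$ and then invoking \lemref{hycl}.

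One small correction in your final paragraph: the multiplicity dictionary is reversed. For a non-real representative $\lambda$, the complex multiplicity of $\lambda$ as an eigenvalue of $A_{\C}$ equals (not twice) the quaternionic multiplicity of $[\lambda]$, with $\bar\lambda$ contributing a second copy; it is precisely when $\lambda\in\R$ that the complex multiplicity of $\lambda$ is \emph{twice} the quaternionic multiplicity, since the conjugate pair collapses. Either way the total complex multiplicity of the set $\{\lambda,\bar\lambda\}$ is twice the quaternionic multiplicity, so your reconstruction of the similarity classes from $\chi_A$ goes through unchanged; just swap the ``twice'' and ``once'' in your parenthetical.
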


\begin{definition}
Let $A$ be a semisimple element in $\Sp(n,1)$. The real $n$-tuple $(a_1, \ldots, a_n)$ as in  \propref{rt} will be called the  \emph{real trace} of $A$ and we shall denote it by $tr_{\R}(A)$.
\end{definition}
\subsection{The Cross Ratios}\label{crai}
Given an ordered quadruple of pairwise distinct points $(z_1, z_2, z_3, z_4)$ on $\overline{\h^n}$, their Kor\'anyi-Reimann (quaternionic) cross ratio is defined by
$$\X(z_1, z_2, z_3, z_4)=[z_1,z_2,z_3,z_4]={\langle {\bf z}_3, {\bf z}_1 \rangle \langle {\bf z}_3, \bf z_2 \rangle}^{-1} { \langle {\bf z}_4, {\bf z}_2\rangle \langle   {\bf z}_4, {\bf z}_1 \rangle^{-1}},$$
where, for $i=1,2,3,4$,  ${\bf z}_i$, are lifts of $z_i$. Unlike the complex case, quaternionic cross ratios are not independent of the chosen lifts. However, similarity classes of the cross ratios are independent of the chosen lifts. In other words, the conjugacy invariants obtained from the cross ratios are $\Re(\X)$ and $|\X|$. Under the action of the symmetric group ${\rm S}_4$ on a tuple, there are exactly three orbits, see \cite[Prop 3.1]{platis}. This implies that the moduli and real parts of the quaternionic cross ratios are determined by the following three cross ratios.
$$\X_1(z_1, z_2, z_3, z_4)=\X(z_1, z_2, z_3, z_4)=[\z_1, \z_2, \z_3, \z_4],$$
$$\X_2(z_1, z_2, z_3, z_4)=\X(z_1, z_4, z_3, z_2)=[\z_1, \z_4, \z_3, \z_2],$$
 $$\X_3(z_1, z_2, z_3, z_4)=\X(z_2, z_4, z_3, z_1)=[\z_2, \z_4, \z_3, \z_1].$$
 Usually, cross ratios are defined for boundary points but we can generalize it for $\overline{\h^n}$ and we will use it in \secref{moduli}.
Platis defined cross ratios for boundary points and proved that these cross ratios satisfy the following  real relations: 
$$|\X_2|=|\X_1||\X_3|, ~ \hbox{and, }~~  2|\X_1|\Re(\X_3)\geq |\X_1|^2+|\X_2|^2-2\Re(\X_1)-2\Re(\X_2)+1,$$
where equality is attained if and only if certain conditions hold, see \cite[Proposition 3.4]{platis}. 
 W. Cao also defined it for $\overline{\h^n}$, for more details, see \cite[Section 3]{Cao}. 
\subsection{Cartan's angular invariant}\label{cai}
Let $z_1,~z_2,~z_3$ be three distinct points of $\overline{ \h^n}=\h^n \cup \partial\h^n$, with lifts $\z_1,~\z_2$ and $\z_3$ respectively. The quaternionic Cartan's angular invariant associated to the triple $(z_1, z_2, z_3)$ was defined by Apanasov and Kim in \cite{ak} and is given by the following:
$$\A(z_1,~z_2,~z_3)=\arccos\frac{\Re(-\langle \z_1, \z_2, \z_3\rangle)}{|\langle \z_1, \z_2, \z_3\rangle|}.$$
The angular invariant is an element in $[0, \frac{\pi}{2}]$. It is independent of the chosen lifts and also $\Sp(n,1)$-invariant. The following proposition shows that this invariant determines
any triple of distinct points on $\partial\h^n$ up to ${\rm Sp }(n,1)$-equivalence. For a proof see  \cite{ak}.
\begin{prop}\label{cai1}{\rm \cite{ak}}
Let $z_1,~z_2,~z_3$ and $z_1^\prime,~z_2^\prime,~z_3^\prime$ be triples of distinct points of $\partial\h^n$. Then $\A(z_1,~z_2,~z_3)=\A(z_1^\prime,~
z_2^\prime,~z_3^\prime)$ if and only if there exist $A\in {\rm Sp }(n,1)$ so that $A(z_j)=z_j^\prime$ for $j=1,2,3$.
\end{prop}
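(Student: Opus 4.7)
\noindent\emph{Proof proposal.} The implication that $\Sp(n,1)$-congruent triples have equal angular invariants is immediate from the lift-independence of $\A$ noted just above in the excerpt together with the fact that $\Sp(n,1)$ preserves $\langle\cdot,\cdot\rangle$. I focus on the converse, which is the substantive direction.

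First I would reduce to a standard position using the $2$-transitivity of $\Sp(n,1)$ on pairs of distinct points of $\partial\h^n$, assuming $z_1=z_1'=\infty$ and $z_2=z_2'=\mathbf{o}$. For a third boundary point with standard lift $\z_3=(w_1,w_2,\dots,w_n,1)^T$ satisfying $2\Re(w_1)+|w_2|^2+\cdots+|w_n|^2=0$, a direct computation with the matrix $H$ yields
\[
\langle\z_1,\z_2\rangle=1,\qquad \langle\z_2,\z_3\rangle=\bar w_1,\qquad \langle\z_3,\z_1\rangle=1,
\]
so that $\A(\infty,\mathbf{o},z_3)=\arccos(-\Re(w_1)/|w_1|)$. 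The same formula holds with primes throughout, so the angular hypothesis reduces to $\Re(w_1)/|w_1|=\Re(w_1')/|w_1'|$.

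Next, I would invoke the stabilizer of the ordered pair $(\infty,\mathbf{o})$, which a quick check of $A^*HA=H$ shows to consist of the block-diagonal matrices $A=\mathrm{diag}(\lambda,U,\bar\lambda^{-1})$ with $\lambda\in\H^*$ and $U$ in the compact group preserving the positive-definite Hermitian form on $\H^{n-1}$. After rescaling so that the last coordinate of $A\z_3$ is once again $1$, the induced action is $w_1\mapsto\lambda w_1\bar\lambda$ and $v:=(w_2,\dots,w_n)^T\mapsto Uv\bar\lambda$. The identity $\lambda w_1\bar\lambda=|\lambda|^2\,\lambda w_1\lambda^{-1}$ shows that the first-coordinate orbit of $w_1$ is precisely the set of quaternions with the same argument, so the angular hypothesis lets me pick $\lambda$ with $\lambda w_1\bar\lambda=w_1'$ and, after a real rescaling of $\lambda$, with $|\lambda|^2=|w_1'|/|w_1|$. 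The null-vector identities $|v|^2=-2\Re(w_1)$ and $|v'|^2=-2\Re(w_1')$ then combine with the angular equality to force $|v'|=|\lambda|\cdot|v|$, so transitivity on spheres in $\H^{n-1}$ of the compact symplectic group yields $U$ with $Uv\bar\lambda=v'$, completing the argument.

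The hard part will be the coupling enforced by non-commutativity: a single scalar $\lambda$ must simultaneously handle the twisted conjugation $w_1\mapsto\lambda w_1\bar\lambda$ on the first coordinate and the right-twisted action $v\mapsto Uv\bar\lambda$ on the middle coordinates, and these impose a priori competing magnitude constraints on $\lambda$. The essential observation is that the boundary null condition together with the angular hypothesis make these two constraints coincide, so that one $\lambda$ works for both; if this compatibility failed, no amount of cleverness in choosing $U$ would recover it.
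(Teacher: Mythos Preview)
Your argument is correct. Note, however, that the paper does not supply its own proof of this proposition: it simply records the statement and refers to Apanasov--Kim \cite{ak} with the remark ``For a proof see \cite{ak}.'' So there is no in-paper proof to compare against; what you have written is a self-contained substitute for the cited reference.

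A couple of small checks you left implicit are worth making explicit if you write this up. First, your description of the stabilizer of the ordered pair $(\infty,o)$ as exactly the block-diagonal matrices $\mathrm{diag}(\lambda,U,\bar\lambda^{-1})$ is indeed correct with the Siegel-domain form $H$ used here: the conditions $Ae_1\in e_1\H$ and $Ae_{n+1}\in e_{n+1}\H$ force the first and last columns, and then the off-diagonal entries of $A^{\ast}HA=H$ kill the remaining $a_{1j}$ and $a_{n+1,j}$ for $2\le j\le n$. Second, the compatibility step---that the modulus constraint $|\lambda|^2=|w_1'|/|w_1|$ coming from the first coordinate agrees with the constraint $|\lambda|^2=|v'|^2/|v|^2=\Re(w_1')/\Re(w_1)$ coming from the middle block---is exactly the angular-invariant hypothesis $\Re(w_1)/|w_1|=\Re(w_1')/|w_1'|$, as you say. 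The degenerate case $\Re(w_1)=0$ (angular invariant $\pi/2$) forces $v=0$ and $v'=0$, so there the $U$-step is vacuous and only the conjugation $\alpha w_1\alpha^{-1}$ of pure imaginaries is needed; you may want to mention this explicitly since your ratio $|v'|/|v|$ is then $0/0$.
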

Further, it is proved in \cite{Cao} that $(z_1, z_2, z_3)$ lies on the boundary of an $\H$-line, resp. a totally real subspace, if and only if $\A=\frac{\pi}{2}$, resp.  $\A=0$.


\section{ Eigenvalue Grassmannians}\label{grass}
Let $T$ be an invertible semisimple matrix over $\H$ of rank $n$. Let $\lambda \in \H \setminus\R$ be a chosen eigenvalue of $T$ in the similarity class $[\lambda]$ with multiplicity $m$, $m \leq n$. Thus, the eigenspace of $[\lambda]$ can be identified with $\H^m$. Let $\lambda$ be a representative of $[\lambda]$.  Consider the $\lambda$-\emph{eigenset}: $S_{\lambda}=\{ x \in \H^n \ | \ Tx =x \lambda \}$. Note that this set can be identified with the subspace $ Z(\lambda)^m$ in $\H^m$. Thus each eigenset of a $[\lambda]$-representative is a copy of ${\C}^m$ in $\H^m$.  So, the set of eigensets in the $[\lambda]$-eigenspace can be identified with the set of $m$ dimensional complex subspaces of $\H^m$. By identifying $\H^m$ to $\C^{2m}$, we obtain the set of $[\lambda]$-eigensets as the space complex $m$-dimensional subspaces of $\C^{2m}$, which is the complex  Grassmannian manifold $G_{m, 2m}$, or simply $G_m$. This is a compact connected smooth complex manifold of complex dimension $m^2$. We call it the \emph{eigenvalue Grassmannian} of $T$ corresponding to the eigenvalue $[\lambda]$, or simply $[\lambda]$-Grassmannian. Each point on this Grassmannian corresponds to an eigenset of $[\lambda]$.  When $m=1$, the eigenvalue Grassmannian is simply $\C \P^1$, and a point on such $\C \P^1$ has been termed as a \emph{projective point} in \cite{gk1}. The $[\lambda]$-eigenvalue Grassmannian is a conjugacy invariant of an eigenvalues but individual points are not. They help us to distinguish individual isometries in the same conjugacy class.

\section{Semisimple Isometries in $\Sp(n,1)$} \label{semi}
 In $\Sp(n,1)$, the semisimple isometries are classified as hyperbolic and elliptic. 
\subsubsection{Elliptic Isometries} 
Let $A$ be an elliptic element in $\Sp(n,1)$.  Recall that an eigenvalue of an elliptic element $A$ always has norm $1$.  Let $\lambda$ be an eigenvalue from the similarity class of eigenvalues $[\lambda]$ of $A$. Let $\x$ be a $\lambda$-eigenvector. Then $\x$ defines a point $x$  on $\H\P^n$, that is either a point on  $\h^n$ or a point in $\P(\V_{+})$. The lift of $x$ in $\H^{n,1}$ is the quaternionic line $\x \H$. We call $x$ as \emph{projective fixed point} of $A$ corresponding to $[\lambda]$. 

\medskip Let the eigenvalues of $A$ be the $n+1$ unit complex numbers $e^{i \theta_1}, \ldots, e^{i \theta_{n+1}}$, where $e^{i \theta_1}$ is negative and $e^{i \theta_k}$, $k=2, \ldots, n+1$, positive. Up to conjugacy, $A$ is of the form:

\begin{equation}\label{li1}
  E_A(\theta_1, \theta_2, \ldots, \theta_{n+1})=\begin{bmatrix}
                         e^{\i\theta_1} &  0& 0 & \ldots & 0 & 0 \\
                         0 & e^{\i\theta_2} & 0 & \ldots & 0 & 0 \\
                         & & \ddots & & & \\
                         0 & 0 & 0 &\ldots & e^{\i \theta_{n}}& 0 \\
                         0& 0& 0 &\ldots & 0 &  e^{\i\theta_{n+1}}\\
                        \end{bmatrix}.\\
\end{equation}

 Let $C_A=\left[\begin{array}{ccccc} \x_{1, A} & \x_{2,A} & \ldots & \x_{n+1, A}\\
  \end{array}\right]$ be the matrix corresponding to the eigenvectors of the above eigenvalue representatives. We can choose $C_A$ to be an element of $\Sp(n,1)$ by normalizing the eigenvectors:
$$\langle \x_{1,A}, \x_{1,A} \rangle=-1,~\langle \x_{j,A}, \x_{j,A} \rangle=1, ~ j \neq 1.$$
 Then $A=C_A E_A C_A^{-1}$. 
\subsubsection{Eigenspace decomposition of an elliptic element} \label{edec} 
Suppose $A$ is an elliptic element in $\Sp(n,1)$. Suppose that the eigenvalue classes  of $A$ are represented by  $e^{\i \theta_1}$,  $e^{ \i \theta_2}, \ldots, e^{\i \theta_k}$, ordered so that $e^{\i \theta_1}$ is the negative eigenvalue. Let $\V_{\theta_i}$ be the eigenspace to the eigenvalue class of $e^{\i \theta_i}$. Let $m_i=\dim \V_{\theta_i}$. We call $(m_1, \ldots, m_k)$ the \emph{multiplicity} of $A$. 
The space  $\H^{n,1}$ has the following orthogonal decomposition  into eigenspaces (here $\oplus$ denotes orthogonal sum):
\begin{equation} \label{decom 2} \H^{n,1}=\V_{\theta_1} \oplus \ldots \oplus \V_{\theta_k},\end{equation} 
Change of eigenbasis amounts to conjugation by an element $C$, and $CAC^{-1}=A$ if and only if $C \in Z(A)$. So, a normalised eigenbasis of $A$ is determined up to  conjugation action of  $Z(A)=\prod_{i=1}^k Z(A|_{\V_{\theta_i}})$ on each of the summands. 

For description of the centralizers, see \cite{g1}. Let $e^{\i \theta_j}$ represent an eigenvalue of $A$ with multiplicity $m_j$. It follows from \cite{g1} that $Z(A|_{\V_{\theta_j}})$ can be identified with $\U(m_j-1,1)$ if the eigenvalue is negative, and with $\U(m_j)$ otherwise. So, 
if $A$ does not have eigenvalues $1$ or $-1$,  given the multiplicity $(m_1, \ldots, m_k)$, $Z(A)$ may be identified to the group
$$Z(A)=\U(m_1-1,1) \times\U(m_2) \times \ldots \U(m_k).$$
When $A$ has an eigenvalue $1$ or $-1$, one of the factors in the above product is replaced by $\Sp(m_1-1, 1)$ or $\Sp(m_i)$ depending upon the eigenvalue is negative or positive. 
\subsubsection{Hyperbolic isometries}\label{hi} Let $A$ be a hyperbolic element in $\Sp(n,1)$. Let $\lambda$ be an eigenvalue from the similarity class of eigenvalues $[\lambda]$ of $A$. Let $\x$ be a $\lambda$-eigenvector. Then $\x$ defines a point $x$ on $\H\P^n$, that is either a point on  $\partial \h^n$ or a point in $\P(\V_{+})$. The lift of $x$ in $\H^{n,1}$ is the quaternionic line $\x \H$. Then $x$ is a \emph{projective fixed point} of $A$. The quaternionic line $\x \H$ is the \emph{eigenline} spanned by the eigenvector $\x$. 

 \medskip 
There are two eigenvalue classes of null-type and the respective eigenlines correspond to attracting and repelling fixed points. Let $r_A \in \partial\h^n$ be the \emph{ repelling fixed point } of $A$ that corresponds to the eigenvalue $re^{\i \theta}$ and let $a_A$ be the \emph{attracting fixed point} corresponding to the eigenvalue $r^{-1}e^{\i \theta}$.  Let $r_A$ and $a_A$ lift to eigenvectors $\r_A$ and $\a_A$ respectively.  Let $\x_{j,A}$ be an eigenvector corresponding to $e^{\i \phi_j}$. We may further assume that $\theta$, $\phi_j$ are in $[0, \pi]$. The point $x_{j,A}$ on $\P(\V_+)$ is the \emph{polar-point} of $A$. For $(r,\theta, \phi_1, \ldots, \phi_{n-1})$ as above,  let $ E_A(r, \theta, \phi_1, \ldots, \phi_{n-1})$, or simply $E_A$ be the matrix: 

\begin{equation}\label{li2}
  E_A(r, \theta, \phi_1, \ldots, \phi_{n-1})=\begin{bmatrix}
                         re^{\i\theta} &  0& 0 & \ldots & 0 & 0 \\
                         0 & e^{\i\phi_1} & 0 & \ldots & 0 & 0 \\
                         & & \ddots & & & \\
                         0 & 0 & 0 &\ldots & e^{\i \phi_{n-1}}& 0 \\
                         0& 0& 0 &\ldots & 0 &  r^{-1}e^{\i\theta}\\
                        \end{bmatrix}.\\
\end{equation}
 Let $C_A=\left[\begin{array}{ccccc} \a_A & \x_{1,A} & \ldots & \x_{n-1, A}&\r_A\\
  \end{array}\right]$ be the matrix corresponding to the eigenvectors of the above eigenvalue representatives. We can choose $C_A$ to be an element of $\Sp(n,1)$ by normalizing the eigenvectors:
$$\langle \a_A, \r_A \rangle=1,~\langle \x_{j,A}, \x_{j,A} \rangle=1.$$
 Then $A=C_A E_A C_A^{-1}$. 
\subsubsection{Eigenspace decomposition of a hyperbolic element} \label{eidlox} Suppose $A$ is a hyperbolic  element in $\Sp(n,1)$. Suppose also that the eigenvalue classes  are represented by  $re^{\i \theta}$, $r^{-1} e^{\i \theta}$, $r>1$, and $e^{ \i \phi_1}, \ldots, e^{\i \phi_k}$. Let $m_i =\dim \V_{\phi_i}$.  We call $m=(m_1, \ldots, m_k)$ the \emph{multiplicity} of $A$. Then $\H^{n,1}$ has the following orthogonal decomposition  into eigenspaces (here $\oplus$ denotes orthogonal sum):
\begin{equation} \label{decom 1} \H^{n,1}=\L_r \oplus \V_{\phi_1} \oplus \ldots \oplus \V_{\phi_k},\end{equation} 
where $\L_r$ is the $(1,1)$ (right) subspace of $\H^{n,1}$ spanned by $\a_A$, $\r_A$.  As in the elliptic case, change of normalised eigenbasis of $A$ is determined up to the conjugation action of $Z(A)$.

\subsection{Determination of the semisimple elements} In the following, we determine the semisimple isometries. We shall associate certain spatial  parameters to an isometry that would determine it completely. Proposition \ref{lox} below will be crucial for classification of the conjugation orbits of semisimple pairs.

\begin{definition} Let $A$ and $A'$ be two semisimple elements with the same set of eigenvalue classes. We shall say that $A$ and $A'$ have the same projective fixed points if for each non-real class $[\lambda]$ they have the same projective fixed point $x_{\lambda}$. 
\end{definition} 

\begin{definition}
Let $A$ and $A'$ be two semisimple elements having a common non-real eigenvalue class $[\lambda]$. We shall say that $A$ and $A'$ have the same point on the $[\lambda]$-eigenvalue Grassmannian if they have the same point on the eigenvalue Grassmannian with respect to the representative $\lambda$ of $[\lambda]$. 
\end{definition}
\begin{lemma}\label{lox}
Let $A$ and $A'$ be two semisimple elements in $\Sp(n,1)$. Then $A=A'$ if and only if they have the same real trace, the same projective fixed points and the same point on each of the eigenvalue Grassmannians.
\end{lemma}
\begin{proof}
If $A=A'$, then the statement is clear. For the converse, let $A$ and $A'$ be two semisimple elements of $\Sp(n,1)$. Since they have the same real traces, they have the same eigenvalue classes. Further, $A$ and $A'$ have the same projective fixed points, hence $\x_{j,A'}=\x_{j,A} q_j$ for $1 \leq j \leq n+1$, and $q_j \in \H^*$. Hence,
$$A=C_AE_A C_A^{-1}, ~~\hbox{ and } A'=C_{A'} E_A C_{A'}^{-1}.$$

  Let
$$D=\begin{bmatrix} q_1 & 0 & 0 & \ldots & 0  \\ 0 & q_3 & 0& \ldots & 0 \\&  & \ddots & & \\ 0 & 0 & \ldots & q_{n-1} & 0 \\ 0 & 0 & \ldots& 0 & q_2 \end{bmatrix}.$$
Then $A=C_A E_A C_A^{-1}$ and, $A'=C_A D E_A D^{-1} C_A^{-1}$. Thus $A=A'$ if and only if $D$ commutes with $E_A$, which is equivalent to the condition of having  the same point on each of the eigenvalue Grassmannians. 
\end{proof}

In linear algebraic terms, the above lemma may be re-stated as follows. 
\begin{corollary}
Let $A$ and $A'$ be two semisimple elements in $\Sp(n,1)$. Then $A=A'$ if and only if the following holds. 
\begin{enumerate}
\item $A$ and $A'$ have the same similarity classes of eigenvalues.
\item To each eigenvalue class $[\lambda]$, $A$ and $A'$ have the same eigenspace, and 
\item to each representative $\lambda'$ of $[\lambda]$, $A$ and $A'$ have the same eigensets. \end{enumerate} 
\end{corollary} 
\section{Associated points of an isometry}\label{canpo}
\subsection{Associated points of a hyperbolic element} \label{canpoh}

\begin{definition} 
Let $A$ be a hyperbolic element in $\Sp(n,1)$. Let $\mathcal B_A=\{\a_A, \x_{1, A}, \ldots, \x_{n-1, A}, \r_A\}$ be an ordered set of eigenbasis corresponding to $A$, normalised  so that for $1 \leq l \leq n-1$,
\begin{equation} \label{n1}
\langle \a_A,\r_A \rangle=\langle \x_{l,A},\x_{l,A}\rangle =1, ~\langle \x_{l, A}, \x_{m, A}\rangle=0, ~l \neq m.
\end{equation}
  Define a set of $n+1$ boundary points associated to $A$ as follows:
\begin{equation}\label{b1}
\p_{1,A}=\a_A,~ \p_{2, A}=\r_A,~\p_{l,A}=(\a_A-\r_A)/\sqrt{2}+\x_{l-2,A}, ~3\leq l\leq n+1.
\end{equation}
The set  $p_A=\big\{{p_{1, A}, \ldots, p_{n+1, A}}\big\}$ is called a set of \emph{associated points} of $A$. \end{definition} 

\begin{lemma}\label{ash} Let $A$ be a hyperbolic element of $\Sp(n,1)$. Let $Z(A)$ denote the centralizer of $A$ in $\Sp(n,1)$. Then 
the associated points of $A$ is well-defined up to an orbit of the subgroup $Z(A)$. 
\end{lemma} 
\begin{proof}
Let $A$ be a hyperbolic element in $\Sp(n,1)$. Let $\mathfrak p=(p_{1. A}, \ldots, p_{n+1, A})$ be a tuple of associated points of $A$ given by an eigenbasis $\mathcal B_A$ as above. If we choose another normalised  eigenbasis $\mathcal B'_A$ of $A$, we get another set of associated points $\mathfrak p'$. 
The map $M$ changing $\mathcal B_A$ to $\mathcal B'_A$ satisfies $MAM^{-1}=A$. Thus, $M$ belongs to $Z(A)$, and $M(\mathfrak p)=\mathfrak p'$. 
\end{proof} 

\subsection{Associated points of  an elliptic element}\label{canpoe}
\begin{definition} Let $A$ be an elliptic element in $\Sp(n,1)$. Let $\mathcal B_A=\{ \x_{1, A}, \ldots, \x_{n+1, A}\}$ be a set of eigenvectors of $A$ chosen so that for $1 \leq l \leq n+1$,
\begin{equation} \label{m1}
\langle \x_{1,A},\x_{1,_A} \rangle=-1, ~\langle \x_{j,A},\x_{j,A}\rangle =1, ~j \neq 1, ~ \langle \x_{l, A}, \x_{m, A}\rangle=0, ~l \neq m.
\end{equation}
 Define a set of $n+1$ points on $\h^n$ as follows:
\begin{equation}\label{b2}
\p_{1,A}=\x_{1,A},~\p_{l,A}= \x_{1, A} \sqrt 2+\x_{j,A}, ~2\leq j \leq n+1.
\end{equation}
The set $p_A=\big\{{p_{1, A}, \ldots, p_{n+1, A}}\big\}$ is called a set of \emph{associated points} to $A$. \end{definition} 

\begin{lemma} Let $A$ be an elliptic element of $\Sp(n,1)$. Let $Z(A)$ denote the centralizer of $A$ in $\Sp(n,1)$. Then 
the associated points of $A$ is well-defined up to an orbit of the subgroup $Z(A)$. 
\end{lemma}

The proof of the above lemma is similar to the proof of \lemref{ash}. 

\begin{definition}
Given a semisimple element $A$ in $\Sp(n,1)$, a set of eigenbasis of the type $\mathcal B_A$ as given above, will be called an \emph{eigenframe} of $A$. 

A set of $n+1$ vectors of $\H^{n,1}$ alike an eigenframe will be called an \emph{orthonormal frame}. 
\end{definition} 

\subsubsection{Change of associated points amounts to a change of eigenbases}
The following lemma is easy to prove and it shows that the change of associated points amount change of eigenbases. 
\begin{lemma}\label{nor}
 Let $A,~A^{'}$ be semisimple elements in $ \Sp(n,1)$ with  chosen eigenframes. Let $p_A=\big\{{p_{1, A}, \ldots, p_{n+1,A}}\big\}$ and $p_{A'}=\big\{{p_{1, A'}, \ldots, p_{n+1, A'}}\big\}$ be sets of associated points to $A$ and $A'$, respectively. Suppose that there exists $C\in \Sp(n,1)$ such that $C(p_{l,A})=p_{l,A'}$,  $1\leq l \leq n$. Then $C(x_{j,A})=x_{j,A'}$ for all $j$. 
\end{lemma}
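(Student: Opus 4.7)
The plan is to exploit the $\H$-linearity of $C$, the explicit formulas \eqnref{b1} (hyperbolic case) and \eqnref{b2} (elliptic case) for the associated points in terms of the eigenframes, and the fact that $C \in \Sp(n,1)$ preserves the Hermitian form. Since the associated points are projective points, the hypothesis $C(p_{l,A}) = p_{l,A'}$ translates into the existence of scalars $\lambda_l \in \H^\ast$ with $C(\p_{l, A}) = \p_{l, A'}\lambda_l$ for $1 \le l \le n$.

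First I would handle the hyperbolic case. The equations for $l = 1, 2$ give $C(\a_A) = \a_{A'} \lambda_1$ and $C(\r_A) = \r_{A'} \lambda_2$. Substituting these into the relation coming from $\p_{l, A} = (\a_A - \r_A)/\sqrt{2} + \x_{l-2, A}$ for $3 \le l \le n$ and comparing coefficients in the $\H$-basis $\{\a_{A'}, \r_{A'}, \x_{1, A'}, \ldots, \x_{n-1, A'}\}$ of $\H^{n,1}$, one reads off that $\lambda_1, \lambda_2, \lambda_3, \ldots, \lambda_n$ all coincide with a common scalar $\lambda$ and that $C(\x_{j, A}) = \x_{j, A'}\lambda$ for $1 \le j \le n-2$. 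This already delivers the projective equality $C(x_{j, A}) = x_{j, A'}$ for every eigenvector of $A$ except possibly $\x_{n-1, A}$.

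To handle this last eigenvector I would use the isometry property of $C$ together with the orthonormality conditions \eqnref{n1}. The set $\{\a_{A'}, \r_{A'}, \x_{1, A'}, \ldots, \x_{n-1, A'}\}$ is an orthonormal basis of $\H^{n,1}$, so the $\langle \cdot, \cdot \rangle$-orthogonal complement of $\mathrm{span}\{\a_{A'}, \r_{A'}, \x_{1, A'}, \ldots, \x_{n-2, A'}\}$ is exactly $\x_{n-1, A'}\H$. Since $C$ preserves the Hermitian form and has already been shown to carry the corresponding codimension-one subspace on the $A$-side onto this span (up to the common right scalar $\lambda$), it must send $\x_{n-1, A}$ into $\x_{n-1, A'}\H$, giving the desired projective equality $C(x_{n-1, A}) = x_{n-1, A'}$.

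The elliptic case proceeds in exactly the same fashion, with \eqnref{b2} and the orthonormality conditions \eqnref{m1} replacing \eqnref{b1} and \eqnref{n1}, and with $\x_{n+1, A}$ playing the role of the leftover eigenvector. The only subtle step in either case is the treatment of that last eigenvector, which is not touched by any hypothesis on $l \in \{1, \ldots, n\}$; the orthogonal-complement argument resolves this cleanly, and the rest of the proof is coefficient comparison in a fixed $\H$-basis.
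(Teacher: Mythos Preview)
Your overall strategy is correct and close in spirit to the paper's, but there is a small gap in the ``comparing coefficients'' step that you should patch. When you expand
\[
\frac{1}{\sqrt{2}}(\a_{A'}\lambda_1-\r_{A'}\lambda_2)+C(\x_{l-2,A})
=\frac{1}{\sqrt{2}}(\a_{A'}-\r_{A'})\lambda_l+\x_{l-2,A'}\lambda_l
\]
in the basis $\{\a_{A'},\r_{A'},\x_{1,A'},\ldots,\x_{n-1,A'}\}$, you do \emph{not} yet know that $C(\x_{l-2,A})$ has vanishing $\a_{A'}$- and $\r_{A'}$-components; without that, equating $\a_{A'}$-coefficients only gives $\lambda_1/\sqrt{2}+\mu_1=\lambda_l/\sqrt{2}$ for some unknown $\mu_1$, which does not force $\lambda_1=\lambda_l$. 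The missing ingredient is exactly the form-preservation you invoke later: since $\langle\x_{l-2,A},\a_A\rangle=\langle\x_{l-2,A},\r_A\rangle=0$ and $C$ is an isometry, $C(\x_{l-2,A})$ is orthogonal to $\a_{A'}$ and $\r_{A'}$, so $\mu_1=\mu_2=0$ and your coefficient comparison then goes through.

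The paper sidesteps this by working with inner products from the outset rather than basis expansions: it computes $\langle\p_{1,A},\p_{l,A}\rangle=-1/\sqrt{2}=\langle\p_{1,A'},\p_{l,A'}\rangle$ and $\langle\p_{2,A},\p_{l,A}\rangle=1/\sqrt{2}=\langle\p_{2,A'},\p_{l,A'}\rangle$, and from $C$ preserving the form reads off $\alpha_l=\bar\alpha_1^{-1}=\bar\alpha_2^{-1}$; together with $|\alpha_1|=1$ (from $\langle\p_{1,A},\p_{2,A}\rangle=1$) this gives a common scalar, and then linearity yields $C(\x_{l-2,A})=\x_{l-2,A'}\alpha_1$. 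Once the gap above is filled, the two arguments are equivalent. Your orthogonal-complement treatment of the leftover eigenvector $\x_{n-1,A}$ is exactly what the paper's terse ``This implies $C(x_{n-1,A})=x_{n-1,A'}$'' is gesturing at, so there you are in fact more explicit than the original.
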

\begin{proof}
We prove our claim for hyperbolic isometries. The elliptic case is similar. 

Let $C(\p_{l,A})=\p_{l,A'}\alpha_l$ for $1\leq l\leq n$. Observe that $\langle \p_{1,A},\p_{l,A} \rangle=-1/\sqrt{2}=\langle \p_{1,A'},\p_{l,A'} \rangle$
   and $\langle \p_{2,A},\p_{l,A} \rangle=1/\sqrt{2}=\langle \p_{2,A'},\p_{l,A'} \rangle \text{ for }3\leq l\leq n$. Since $C\in \Sp(n,1)$ preserve the form $\langle .,. \rangle$, from these relations we have 
 $ \alpha_{i}={ \bar \alpha_1}^{-1}={\bar \alpha_{2}}^{-1}$ for $ 3\leq i \leq n$. Now, $\langle \p_{1, A}, \p_{2, A}\rangle=1$ gives $|\alpha_1|=1$. Hence,  $C(x_{i-2,A})=x_{i-2,A'}$ for 
  $3\leq i \leq n$. This implies $C(x_{n-1, A})=x_{n-1, A'}$. 
  \end{proof}

\section{Canonical orbit  of a pair} \label{corbit}
\subsection{Canonical orbit  of a pair of hyperbolics}\label{corbith}
\subsubsection{Moduli of normalised boundary points} Consider the set  $\mathcal E$ of ordered tuples of boundary and polar points on $(\partial \h^n)^4 \times \P(\V_+)^{2(n-1)}$ given by a pair of orthonormal frames $(F_1, F_2)$:  
$$\mathfrak p=(q_1, q_2, r_1, r_2, \ldots, r_{n-1}, q_{n+1}, q_{n+2}, r_{n+1}, \ldots, r_{2n-1}).$$
 This corresponds to pair of orthonormal frames of $\H^{n,1}$: 
$$\hat {\mathfrak p}=(\q_1, \q_2, \r_1, \r_2, \ldots, \r_{n-1}, \q_{n+1}, \q_{n+2}, \r_{n+1}, \ldots, \r_{2n-1}),$$
where $\{ \q_1, \q_2\} \cap \{ \q_{n+1}, \q_{n+2}\} =\emptyset$,  $\langle \q_i, \q_i \rangle=0=\langle \q_{n+i}, \q_{n+i} \rangle$ for  $i=1, 2$, 
$\langle \r_j, \r_j\rangle=\langle \r_{n+j}, \r_{n+j} \rangle=1$ for all $j=1, \ldots, n-1$,
$\langle \q_1, \q_2 \rangle=\langle \q_{n+1}, \q_{n+2} \rangle=\langle \q_1, \q_{n+2} \rangle=1$, 
$\langle \q_i, \r_j \rangle=0=\langle \q_{n+i}, \r_{n+j} \rangle$,  for $i=1, 2$,  $j=1, \ldots, n-1$. 

\medskip To each such point, we have an ordered tuple of boundary points, not necessarily distinct,  $(p_1, \ldots, p_{2n+2})$ satisfying the conditions:

\begin{equation}\label{eq1} 
\langle \p_1, \p_2 \rangle= \langle \p_{n+2}, \p_{n+3}\rangle=\langle \p_1, \p_{n+2}\rangle=1,\end{equation}

\begin{equation}\label{eq12}
\langle \p_i, \p_j \rangle=-1=\langle \p_{n+i}, \p_{n+j} \rangle, i \neq j, ~i, j=3, \ldots, n-1;
\end{equation}

\begin{equation} \label{eq2} \langle \p_1, \p_i \rangle=-\frac{1}{\sqrt 2}=\langle \p_{n+2}, \p_{n+i}\rangle,~ i=3, \ldots, n-1;\end{equation}
\begin{equation} \label{eq3}  \langle \p_2, \p_k \rangle=\frac{1}{\sqrt 2}=\langle \p_{n+2}, \p_{n+k} \rangle, ~k=3, \ldots, n-1,\end{equation}
where $\p_i$ denotes the standard lift of $p_i$ for each $i$.  Note that $\p_i, \p_{n+i}$, $i=3, \ldots, n$, may not be distinct. In this case, we relabel them and write them as a ordered tuple of distinct boundary points $\hat {\mathfrak p}=(p_1, p_2, \ldots, p_t)$, $n+3 \leq t \leq 2n+2$, so that they correspond to the original ordering of $\mathfrak p$. 

\medskip Let $\mathcal L_t$ be the section of $\M(n, t,0)$ defined by the equations \eqnref{eq1}--\eqnref{eq3}, and the ordering as described above. Let $\mathcal L$ be the disjoint union $\mathcal L=\bigcup_{t=n+3}^{ 2n+2} \mathcal L_t$.

\subsubsection{Canonical orbit of a hyperbolic pair} 
Let $(A, B)$ be a hyperbolic pair in $\Sp(n,1)$.  Given the pair $(A, B)$, consider the ordered set of eigenvectors of $A, B$ given by the tuple
$$\mathfrak e=(\a_A,  \r_A, \x_{1, A}, \ldots, \x_{n-1, A}, \a_B, \r_B, \x_{1, B}, \ldots, \x_{n-1, B}), $$
with normalization as follows:
\begin{equation} \label{n1'}
\langle \a_A,\r_A \rangle=\langle \x_{i,A},\x_{i,A}\rangle =1, ~\langle \x_{i, A}, \x_{j, A}\rangle=0, ~i \neq j.
\end{equation}
\begin{equation} \label{n2}
\langle \a_B,\r_B \rangle=\langle \x_{i,B},\x_{i,B}\rangle =1, ~\langle \x_{i, B}, \x_{j, B}\rangle=0, ~i \neq j.
\end{equation}
\begin{equation}\label{n3}
\langle \r_A,\a_B \rangle=1.
\end{equation}
  Assign the associated boundary points to $\mathfrak e$ defined by \eqnref{b1}: 
\begin{equation}\label{ep} \mathfrak p=(a_A, r_A,  q_{1, A}, \ldots, q_{n-1, A}, a_B, r_B, q_{1, B}, \ldots, q_{n-1, B}). \end{equation}
 If we change $\mathfrak e$ to another pair of eigenframes $\mathfrak e'$ of $(A, B)$, say 
$$\mathfrak e'=(C(\a_A), C(\r_A),  C(\x_{1, A}), \ldots, C(\x_{n-1, A}), D(\a_B), D(\r_B), D(\x_{1, B}), \ldots, D(\x_{n-1, B})),$$
then since we are not changing $(A, B)$, must have  $C \in Z(A)$ and $D\in Z(B)$. Accordingly, there is an action of $Z(A) \times Z(B)$ by the change of eigenframes, and the point $\mathfrak p$ changes to a point $\mathfrak p'$ on some $\M(n, t, 0)$, where $n+3 \leq t \leq 2n+2$. 
Thus, $\mathfrak e$, and hence $\mathfrak p$ is determined by $(A, B)$ up to the above action of the group $Z(A)\times Z(B)$ on $\mathfrak p$. 

The $Z(A) \times  Z(B)$ action on $\mathfrak p$ above defines a set of points on $\mathcal L$. We shall call this set as a $Z(A) \times Z(B)$ orbit on $\mathcal L$ and denote it by $[\mathfrak p]$. We call $[\mathfrak p]$ the \emph{canonical orbit} of $(A, B)$. The association of the canonical orbit $[\mathfrak p]$   to the conjugacy class of $(A, B)$ is well-defined.

\medskip It follows from the description of centralizers in \cite{g1} that for all pairs of hyperbolic elements $(A, B)$ with multiplicities $(a_1, \ldots, a_k; b_1, \ldots, b_l)$, and without an eigenvalue $1$ or $-1$, we can identify their centralizers. This induces an action of $Z(A) \times Z(B)$ 
 on $\mathcal L$ by the above construction. The orbit space on $\mathcal L$ under this $Z(A) \times Z(B)$ action is denoted by $\mathcal{QL}_n(a_1, \ldots, a_k; b_1, \ldots, b_l)$. If either of the hyperbolic elements in the pair has an eigenvalue $1$ or $-1$, then the group $Z(A) \times Z(B)$ changes, but the same construction go through. Taking disjoint union of all such orbit spaces, we get a space  $\mathcal{QL}_n$. Each point on    $\mathcal{QL}_n$ corresponds to a conjugacy class of a hyperbolic pair $(A, B)$.

\subsection{Canonical orbit  of a pair of elliptics}\label{corbite}
\subsubsection{Moduli of normalised points} Consider the set  $\mathcal E$ of ordered tuples of points on $(\h^n)^2\cup \P(\V_+)^{2n}$ given by a pair of orthonormal frames $(F_1, F_2)$:  
$$\p=(x_1, \ldots, x_{n+1}, x_{n+2}, \ldots, x_{2n+2}).$$

\medskip To each such point, we have an ordered tuple of  negative points, not necessarily distinct,  
$(p_1, \ldots, p_{2n+2})$ satisfying the conditions:

\begin{equation}\label{eq1'}
\langle \p_{1, A}, \p_{1, A}\rangle=-1=\langle \p_{n+2}, \p_{n+2} \rangle
\end{equation}
\begin{equation}\label{eq2'}
\langle \p_{j, A}, \p_{j, A}\rangle=-1=\langle\p_{j, A'}, \p_{j, A'}\rangle
\end{equation}
\begin{equation}\label{eq3'}
\langle \p_{s, A}, \p_{t, A}\rangle=0=\langle\p_{s, A'}, \p_{t, A'}\rangle, ~ s \neq t, 1 \leq s, t \leq n+1, 
\end{equation}
\begin{equation}\label{eq4'}
\langle \p_{1, A}, \p_{j, A}\rangle=-\sqrt 2=\langle\p_{1, A'}, \p_{j, A'}\rangle, ~ j \neq 1,
\end{equation}
\begin{equation}\label{eq5'}
\langle \p_{l, A}, \p_{m, A}\rangle=- 2=\langle\p_{l, A'}, \p_{m, A'}\rangle, ~ l, m \neq 1,
\end{equation}
where $\p_i$ denotes the standard lift of $p_i$ for each $i$.  Note that $\p_i, \p_{n+i}$, $i=3, \ldots, n$, may not be distinct. If they are not distinct, we relabel them and write them as an ordered tuple of distinct negative points $\hat {\p}=(p_1, p_2, \ldots, p_t)$, $n+3 \leq t \leq 2n+2$, so that they correspond to the original ordering of $\p$. 

\medskip Let $\mathcal L_t$ be the section of $\M(n, 0, t)$ defined by the equations \eqnref{eq1'}--\eqnref{eq5'}, and let also the ordering as described above. Let $\mathcal L$ be the disjoint union $\mathcal L=\bigcup_{t=n+3}^{ 2n+2} \mathcal L_t$.

\subsubsection{Canonical orbit of an elliptic pair} 
Let $A$ and $B$ are elliptic elements in $\Sp(n,1)$ without a common fixed point.  Given the pair $(A, B)$, fix eigenframes of $A$ and $B$  so that
 $$\langle \x_{1,A},\x_{1,A}\rangle=-1=\langle \x_{1,B},\x_{1,B}\rangle, ~\langle \x_{i,A},\x_{i,A}\rangle =\langle \x_{i,B},\x_{i,B}\rangle=1=\langle \x_{1, A}, \x_{1, B}\rangle, 1\leq i \leq n+1. $$
Consider the ordered tuple of eigenvectors $\mathcal B=( \x_{1, A}, \ldots, \x_{n+1, A},  \x_{1, B}, \ldots, \x_{n+1, B})$. 
This gives an ordered tuple of points in $\h^n$ given by
$$\mathfrak p=(p_{1,A}, \ldots, p_{n+1,A},  p_{1,B}, \ldots, p_{n+1,B}),$$
where $\p_{i,A}$, $\p_{i,B}$ are defined by \eqnref{b2}. 

\medskip 
The tuple $\mathfrak p$ is determined by $(A, B)$ up to the above action of the group 
$G=Z(A)\times Z(B)$ on $\mathfrak p$. So, to each pair $(A, B)$, we have a $Z(A)\times Z(B)$ orbit of associated points.  The $Z(A) \times  Z(B)$ action on $\mathfrak p$  defines a set of points on $\mathcal L$. We shall call this set as a $Z(A) \times Z(B)$ orbit on $\mathcal L$ and denote it by $[\mathfrak p]$. We call $[\mathfrak p]$ the \emph{canonical orbit} of $(A, B)$.  The canonical orbit $[\mathfrak p]$  corresponds uniquely to the conjugacy class of $(A, B)$.

\medskip  The above action of $Z(A) \times Z(B)$ on $\mathfrak p$ induces an action of $Z(A) \times Z(B)$ on $\mathcal L$ similarly as described in the previus section. This  gives a $Z(A) \times Z(B)$ orbit  $[\mathfrak p]$ in $\mathcal L$ and we call it the \emph{canonical orbit} of $(A, B)$.  The orbit space on $\mathcal L$ under the above $G$-action will be denoted by the same symbol as in the previous section, $\mathcal{QL}_n(a_1, \ldots, a_k; b_1, \ldots, b_l)$. Taking disjoint union of all such orbit spaces, we get a space  $\mathcal{QL}_n$. Each point on    $\mathcal{QL}_n$ corresponds to a conjugacy class of an elliptic pair $(A, B)$. 
\subsection{Canonical orbit of a mixed pair} 
In this case, the construction is very similar to the elliptic and hyperbolic case. Let $A$ be hyperbolic and $B$  be elliptic  in $\Sp(n,1)$.  Given the pair $(A, B)$, fix a pair of associated orthonormal frames $\mathcal B=(\mathcal B_A, \mathcal B_B)$ so that  the eigenvectors are normalised as in \secref{canpoh} and \secref{canpoe}.  Next we choose an ordering as in the previous section and associate an ordered tuple of points $\mathfrak p$ on $\M(n, n+1, n+1)$.
The $Z(A) \times Z(B)$ action gives a orbit $[\mathfrak p]$ on $\M(n, n+1, n+1)$ as earlier. Taking disjoint union of all such orbits, we get a space, still denoted by $\mathcal {QL}_n$ as in the previous section. Each point on $\mathcal{QL}_n$ corresponds to a conjugacy class of a mixed pair $(A, B)$. 


\section{Proof of \thmref{mainth}}\label{pfmth} 
\begin{proof}For simplicity, we shall assume that neither $A$ nor $B$ has an eigenvalue $1$ or $-1$. The proof is just similar in these omitted cases. 

\medskip Suppose that $(A, B)$ and $(A', B')$ are hyperbolic pairs having equal real trace and the same canonical orbit. Equality of real traces implies that they have the same  multiplicities,  say $(a_1, \ldots, a_k, b_1, \ldots, b_l)$. 
Following the notation in \secref{semi}, we may assume $A=C_{A} E_{A}C_{A}^{-1},~B=C_{B} E_{B}C_{B}^{-1}$ and similarly for $A'$ and $B'$. In this case, $C_A$ is an element in the subgroup 
$\Sp(1,1) \times \Sp(a_1) \times \ldots \times \Sp(a_k)$:
$$C_A=\begin{bmatrix} \a_A &  L_1 &\ldots& L_k & \r_A \end{bmatrix},$$
where $L_i=\begin{bmatrix} \x_{t_i, A} & \ldots & \x_{t_i +a_i -1, A} \end{bmatrix}$, $E_A$ is the diagonal matrix
$$E_A=\begin{bmatrix} re^{\i \theta} & 0 & & \ldots&  \\ 0 &  I_{a_1}\lambda_1 & 0 & 0 \ldots & 0\\& & 
\ddots & & \\0 & 0 & 0 &  I_{a_k}\lambda_k & 0 \\ 0 & 0 & 0 & 0 & r^{-1} e^{\i \theta} \end{bmatrix},$$

where $I_s$ denotes the identity matrix of rank $s$. Similarly $E_B$ is a diagonal matrix 

$$E_B=\begin{bmatrix} se^{\i \alpha} & 0 & & \ldots&  \\ 0 &  I_{b_1}\mu_1 & 0 & 0 \ldots & 0\\& & 
\ddots & & \\0 & 0 & 0 &  I_{b_l}\mu_l & 0 \\ 0 & 0 & 0 & 0 & s^{-1} e^{\i \alpha} \end{bmatrix},$$
and, 
$$C_B=\begin{bmatrix} \a_B &  K_1 &\ldots& K_l & \r_B \end{bmatrix},$$
where $K_i=\begin{bmatrix} \x_{t_j, B} & \ldots & \x_{t_j +b_j -1, B} \end{bmatrix}$. In the above notation,  $t_i=\sum_{p=1}^{i} a_{p-1}$, $s_j=\sum_{p=1}^{j} b_{p-1}$, $a_0=b_0=1$.

\medskip  Since the canonical orbits are equal, by \lemref{nor} it follows that there exists a $C\in{\rm{{\rm Sp }}}(n,1)$ such that $C(a_{A})=a_{A'},~C(r_{A})=r_{A'}, ~C(a_{B})=a_{B'},~C(r_B)=r_B$, and for $1 \leq i \leq k$, $ 1\leq j \leq l$, 
$$C(\x_{t_i, A}, \ldots, \x_{t_i+a_i-1, A})= M(\x_{t_i, A'}, \ldots, \x_{t_i+a_i-1, A'}),$$
$$C(\x_{t_j, B}, \ldots, \x_{t_j+b_j-1, B})= N(\x_{t_j, B'}, \ldots, \x_{t_j+b_j-1, B'}) ,$$
where $M \in Z(A')$, $N \in Z(B')$. 
Since $A'$ commutes with $M$, $A'M(\x_{t_i, A'})=MA'(\x_{t_i, A'})=M(\x_{t_i, A'}) \lambda_i$. From the above, we also have $M(\x_{t_i, A'})=C(\x_{t_i, A})$, which implies that $CAC^{-1}$ and $A'$ have the same projective fixed point given by $M(x_{t_i, A'})=C(x_{t_i, A})$. 

Thus $CAC^{-1}$ and $A'$ have the same projective fixed points. 
Since $A$ and $A'$ define the same point on each of the eigenvalue Grassmannians and have the same real traces, by \lemref{lox},  $CAC^{-1}=A'$. Similarly, $B'=CBC^{-1}$. 

\medskip Suppose $(A, B)$ and $(A', B')$ are elliptic pairs with the same real traces and the same canonical orbit. Since they have the same traces, their multiplicities are also the same, say $(a_1, \ldots, a_k, b_1, \ldots, b_l)$. In this case, $C_A$ is an element in the subgroup 
$\Sp(a_1-1,1) \times \Sp(a_2) \times \ldots \times \Sp(a_k)$:
$$C_A=\begin{bmatrix}  E_1 & E_2 &  \ldots & E_k  \end{bmatrix},$$
where $E_i=\begin{bmatrix} \x_{t_i, A} & \ldots & \x_{t_i +a_i -1, A} \end{bmatrix}$, and $E_A$ is the diagonal matrix
$$E_A=\begin{bmatrix} \lambda_1 I_{a_1} &  & \\& 
\ddots & & \\&  &  & \lambda_k I_{a_k} \end{bmatrix},$$
where $I_s$ denotes the identity matrix of rank $s$. Similarly for $C_B$, let 
$$E_B=\begin{bmatrix} \mu_1 I_{a_1} &  & \\& 
\ddots & & \\&  &  & \mu_k I_{a_k} \end{bmatrix},$$
and 
$$C_B=\begin{bmatrix}  E'_1 & E'_2 &  \ldots & E'_k  \end{bmatrix},$$
where $E'_i=\begin{bmatrix} \x_{t_i, B'} & \ldots & \x_{t_i +a_i -1, B'} \end{bmatrix}$. 

 Since the canonical orbits are equal, by \lemref{nor} it follows that there exists a $C\in{\rm{{\rm Sp }}}(n,1)$ such that  for $1 \leq i \leq k$, $ 1\leq j \leq l$, 
$$C(\x_{t_i, A}, \ldots, \x_{t_i+a_i-1, A})= M(\x_{t_i, A'}, \ldots, \x_{t_i+a_i-1, A'}) ,$$
$$C(\x_{t_j, B}, \ldots, \x_{t_j+b_j-1, B})= N(\x_{t_j, B'}, \ldots, \x_{t_j+a_i-1, B'}) ,$$
where $M\in Z(A')$, $N \in Z(B')$.  Now, using arguments as in the hyperbolic case, it follows that $CAC^{-1}$ and $A'$ have the same projective fixed points. Hence by \lemref{lox}, $CAC^{-1}=A'$. Similarly, $CBC^{-1}=B'$. 

 Suppose $(A, B)$ and $(A', B')$ are mixed pairs such that $A$, $A'$ are hyperbolic and $B$, $B'$ are elliptic. For the mixed pairs case the argument is similar. This completes the proof. 
\end{proof}

\section{Moduli space of $\PSp(n,1)$ congruence classes of distinct points from $\overline{ \h^n}$}\label{moduli}

\subsection{The Gram Matrix}  A common feature in the  works \cite{cugu1}, \cite{Caos}, and the one formulated in this section is the use of the Gram matrices. This is motivated by the ideas of Brehm and Et-Taoui  in \cite{bet} and H\"ofer in \cite{hof}.  

\medskip Let $p = (p_1,p_2,\ldots,p_m)$ be an ordered $m$-tuple of distinct points in $\overline{ \h^n}$. The Gram matrix associated to $p$ is the matrix $G = (g_{ij}),$ where $g_{ij} = \langle \p_j,\p_i \rangle$ with $\mathfrak p=(\p_1,\p_2,\ldots,\p_m)$ is 
a chosen lift of $p$.

\medskip In this section, without loss of generality, we will assume that first $i$  elements from $p = (p_1,p_2,\ldots,p_m)$ are null and remaining other elements are negative, for $3\leq i\leq m$. The following proposition is in \cite[Prop.1.1]{Cao}.

\begin{definition}
We say that two Hermitian  $m\times m$ matrices $G$ and $K$ are equivalent if there exist non-singular diagonal matrix $D$ such that $G=D^{*}KD$.
\end{definition}

\begin{prop}\label{Cao mixed}
If $\z,\w \in \H^{n,1}-\{0\}$ with $\langle \z,\z \rangle \leq 0$ and $\langle \w,\w \rangle =0$ then either $\w=\z \lambda$ for some $\lambda \in \H$ or $\langle\z,\w \rangle \neq 0.$
\end{prop}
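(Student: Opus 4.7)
The plan is to prove the contrapositive: assuming $\langle \z, \w \rangle = 0$, I will show that $\w = \z\lambda$ for some $\lambda \in \H$. The argument splits according to whether $\z$ is negative or null, and in both cases the constraint of the signature $(n,1)$ of the Hermitian form on $\H^{n,1}$ is what rules out any other possibility.

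First I would handle the case $\langle \z, \z \rangle < 0$. Here the right $\H$-line $\z \H$ is negative, and since the Hermitian form has signature $(n,1)$, its orthogonal complement $(\z\H)^{\perp}$ is a positive-definite right $\H$-subspace of dimension $n$. Any $\w \in (\z\H)^{\perp}$ with $\w \neq 0$ would then satisfy $\langle \w, \w \rangle > 0$, contradicting $\langle \w, \w\rangle = 0$. So this case cannot actually occur unless $\w$ is already inside $\z \H$, i.e., $\w = \z \lambda$ for some $\lambda$, but then $\langle \w, \w\rangle = |\lambda|^2 \langle \z, \z\rangle < 0$ unless $\lambda = 0$; hence no $\w$ satisfies the hypotheses with $\langle \z, \w\rangle = 0$, and the conclusion is vacuously fine.

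Next I would treat the case $\langle \z, \z\rangle = 0$. Suppose for contradiction that $\w$ and $\z$ are $\H$-linearly independent while $\langle \z, \w\rangle = 0$. Consider the right $\H$-subspace $W = \z \H + \w \H \subset \H^{n,1}$, which is 2-dimensional. By hypothesis $\langle \z, \z\rangle = \langle \w, \w\rangle = 0$ and $\langle \z, \w\rangle = 0 = \overline{\langle \w, \z\rangle}$, so the Gram matrix of the form on $W$ is the zero matrix. That means $W$ is totally isotropic for $\langle \cdot, \cdot\rangle$. However, since the form has signature $(n,1)$, any totally isotropic right $\H$-subspace has dimension at most $\min(n,1) = 1$. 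This contradicts $\dim_{\H} W = 2$, so $\z$ and $\w$ must be linearly dependent, i.e., $\w = \z \lambda$ for some $\lambda \in \H$.

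The only slightly subtle step is the assertion that $(\z \H)^{\perp}$ is positive-definite of the expected dimension in the first case, and the bound on totally isotropic subspaces in the second; both are standard consequences of signature theory for Hermitian forms over $\H$ (Witt's theorem carries over to the quaternionic setting, as used already in \cite{cg}). No further calculation is needed; the statement follows formally from the signature and nondegeneracy of $\langle \cdot, \cdot \rangle$.
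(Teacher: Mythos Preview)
Your argument is correct. The paper does not actually prove this proposition but simply declares it ``easy to prove'' and cites \cite[Prop.~1.1]{Cao}; your signature-based argument (positive-definiteness of $(\z\H)^{\perp}$ in the negative case, and the $\min(n,1)=1$ bound on totally isotropic subspaces in the null case) is exactly the standard route and is what one finds in the cited reference. One minor remark: in your first case the sentence beginning ``So this case cannot actually occur unless $\w$ is already inside $\z\H$'' is superfluous---once $\langle\z,\w\rangle=0$ forces $\w\in(\z\H)^{\perp}$ and hence $\langle\w,\w\rangle>0$, you already have your contradiction, and there is no need to revisit the possibility $\w\in\z\H$.
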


From this Proposition \ref{Cao mixed} we can see that $\langle \z,\w \rangle \neq 0$, for  $z \in \partial\h^n$ and $w \in \h^n$. Also we will have $\langle \z,\w \rangle \neq 0$, for $z \neq w$ together with the condition that either $z,w \in \partial\h^n$ or $z,w \in \h^n$.
Now by using these observations along with an appropriate chosen lift of $p= (p_1,p_2,\ldots,p_m)$,  the  following lemma follows using a similar argument as in \cite{cugu1}.
\begin{lemma}\label{semii}
 Let $p = (p_1,p_2,\ldots,p_m)$ be a $m$-tuple of distinct points in $\overline{ \h^n}$. Then the  equivalence class of Gram matrices associated to $p$ contains a  matrix $G=(g_{kj})$ with $g_{kk}= 0 $ for $k = 1,2,\ldots,i$,  $\lvert g_{23}\rvert =1$, $g_{1j} = 1$  for $j= 2,3,\ldots,i$, $g_{kk}= -1 $ for $k = i+1,i+2,\ldots,m$, and $g_{1k}= r_{1k}$ for $k= i+1,i+2,\ldots,m$, where $r_{1k}$ are real positive numbers.
\end{lemma}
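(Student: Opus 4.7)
The plan is to replace the given lift $(\p_1,\ldots,\p_m)$ by another lift $(\p_1\lambda_1,\ldots,\p_m\lambda_m)$ with $\lambda_k\in\H^{\ast}$, and to choose these quaternionic scalars in succession so as to realize the prescribed normalization. Under such a rescaling a Gram entry transforms as $g_{kj}\mapsto\bar{\lambda}_k g_{kj}\lambda_j$, a null diagonal entry stays null, and a negative diagonal entry is multiplied by the positive real number $|\lambda_k|^2$. The preceding proposition and the observation following it guarantee that $g_{kj}\neq 0$ whenever $k\neq j$, so every multiplicative inverse invoked below is well-defined.

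First, for each $j=2,\ldots,i$ the requirement $g_{1j}=\bar{\lambda}_1 g_{1j}^{\mathrm{old}}\lambda_j=1$ uniquely fixes $\lambda_j=(g_{1j}^{\mathrm{old}})^{-1}\bar{\lambda}_1^{-1}$ in terms of $\lambda_1$, and the diagonal entry $g_{jj}=0$ is preserved automatically since $p_j$ is null. Next, for each negative point $p_k$ with $i+1\le k\le m$, the real condition $g_{kk}=|\lambda_k|^2 g_{kk}^{\mathrm{old}}=-1$ forces $|\lambda_k|=(-g_{kk}^{\mathrm{old}})^{-1/2}$. Writing $\lambda_k=|\lambda_k|\,u_k$ with $u_k\in\Sp(1)$, I then choose $u_k$ so that $g_{1k}=\bar{\lambda}_1 g_{1k}^{\mathrm{old}}\lambda_k$ is real positive; a short calculation gives the unique solution
\[
u_k=\frac{|g_{1k}^{\mathrm{old}}|}{|\lambda_1|}\,(g_{1k}^{\mathrm{old}})^{-1}\lambda_1,
\]
and with this choice $g_{1k}=|\lambda_k|\,|\lambda_1|\,|g_{1k}^{\mathrm{old}}|>0$.

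The last step is the normalization $|g_{23}|=1$. Substituting the expressions for $\lambda_2$ and $\lambda_3$ obtained above into $g_{23}=\bar{\lambda}_2 g_{23}^{\mathrm{old}}\lambda_3$ and taking moduli yields
\[
|g_{23}|=|\lambda_1|^{-2}\,\frac{|g_{23}^{\mathrm{old}}|}{|g_{12}^{\mathrm{old}}|\,|g_{13}^{\mathrm{old}}|},
\]
so the single real equation $|\lambda_1|^2=|g_{23}^{\mathrm{old}}|/\bigl(|g_{12}^{\mathrm{old}}|\,|g_{13}^{\mathrm{old}}|\bigr)$ determines $|\lambda_1|$. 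The argument of $\lambda_1$ is left free; a direct check shows that changing $\lambda_1$ by a unit quaternion $u\in\Sp(1)$ rescales every $\lambda_k$ on the right by the same $u$, i.e.\ it replaces the lift by $(\p_1 u,\ldots,\p_m u)$, under which all of the stated normalizations are manifestly preserved. Hence the normalized Gram matrix exists within the equivalence class of lifts of $p$.

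The point on which the argument turns is choosing the order of the rescalings so that no condition is overdetermined: because $\H$ is non-commutative, the unit-quaternion freedom used to make each $g_{1k}$ real positive must be carefully coordinated with the other normalizations, which is precisely the reason for reserving $|\lambda_1|$ as the one parameter still available to meet the final real constraint $|g_{23}|=1$. The tally of $4m-3$ real conditions against $4m$ real parameters matches the residual $\Sp(1)$-ambiguity, confirming that the construction is neither over- nor under-determined.
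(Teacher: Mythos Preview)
Your argument is correct and is precisely the standard lift-rescaling computation that the paper has in mind; the paper itself does not spell out the proof but simply states that it ``follows using similar argument as in \cite{cugu1}'' after noting that all off-diagonal entries $g_{kj}$ are nonzero. Your careful bookkeeping of the quaternionic scalars (fixing $\lambda_j$ for the null points via $g_{1j}=1$, fixing $|\lambda_k|$ and then the unit part $u_k$ for the negative points, and finally determining $|\lambda_1|$ from $|g_{23}|=1$) is exactly the intended construction, and your observation that the residual freedom is a simultaneous right $\Sp(1)$-multiplication anticipates Lemma~\ref{4.1o}.
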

\begin{proof} 
Let $\mathfrak p=(\p_1,\p_2,\ldots,\p_m)$ be a lift of $p$. We want to choose a lift of $p$ such that it will satisfy required conditions of the lemma. As $p_i$'s are distinct points in $\overline{ \h^n}$, we get $\langle{\p_k},\p_k\rangle=g_{kk}=r_k$, where $r_k=0$ for $k = 1,2,\ldots,i$ and $r_k$ is a negative real number for $k = i+1,i+2,\ldots,m$. Now we can find out scalars $\lambda_k$ such that $\langle{\p_k}{\lambda_k},\p_k{\lambda_k}\rangle= 0 $ for $k = 1,2,\ldots,i$ and $\langle{\p_k}{\lambda_k},\p_k{\lambda_k}\rangle= -1 $ for $k = i+1,i+2,\ldots,m$. In particular we can take $\lambda_k = 1$ for $k = 1,2,\ldots,i$ and $\lambda_k = {{\sqrt{{-r_k}}}}~^{-1}$ for $k = i+1,i+2,\ldots,m$. 

Now again rescale $\mathfrak p$ by $\beta_1=1$, $\beta_k = \overline{{\langle{\p_1 \lambda_1},\p_k\lambda_k\rangle}~^{-1}}$ for $k = 2,3,\ldots,i$ and $\beta_k = x_{1k}$ for $k = i+1,i+2,\ldots,m$, where $x_{1k}$ is unitary part of $\langle{\p_1}\lambda_1,\p_k\lambda_k\rangle$. Thus we get conditions $g_{1j} = 1$ for $j= 2,3,\ldots,i$ and $g_{1j}= r_{1k}$ for $k= i+1,i+2,\ldots, m$, where $r_{1k} =\lvert \langle{\p_1 \lambda_1},\p_k \lambda_k\rangle\rvert $ are real positive numbers. Finally for getting condition $\lvert g_{23}\rvert = 1$ we will further rescale $\mathfrak p$ by $\gamma_1= \sqrt{r_{23}}$, $\gamma_k= \frac{1}{\sqrt{r_{23}}}$ for $k = 2,3,\ldots,i$ and $\gamma_k=1$ for $k= i+1,i+2,\ldots, m$, where $r_{23} =\lvert \langle{\p_2\lambda_2\beta_2},\p_3\lambda_3\beta_3\rangle\rvert $. So appropriate lift $\mathfrak p=(\p_1 \lambda_1 \beta_1 \gamma_1,\p_2 \lambda_2 \beta_2 \gamma_2,\ldots,\p_m \lambda_m \beta_m \gamma_m)$ of $p$ proves the lemma. 
\end{proof}

\begin{remark} The Gram matrix $G(\mathfrak p)=(g_{kj})$ in the above lemma \ref{semii} has the form 
\begin{align}\label{3.2}
G(\mathfrak p)=
\left[\begin{array}{ccccc|c}
0&1&1&\cdots &1&\\
1&0&g_{23}&\cdots&g_{2i}&\\
1&\overline{g_{23}}&0&\cdots&g_{3i}&G^*\\
\vdots&\vdots&\vdots&\ddots&\vdots&\\
1&\overline{g_{2i}}&\overline{g_{3i}}&\cdots&0&\\
\hline
&&&&&\\
&&\overline{G}^*&&&A
\end{array}\right],
\end{align}
where $\lvert g_{23}\rvert =1$ , $G^*$ is $i \times (m-i)-$ matrix with each nonzero entry together with having first row contains real positive number and $A$ is $(m-i) \times (m-i)-$ matrix with the form,
\begin{center}
	$A=\left[ \begin{array}{cccc}
	 -1 & g_{(i+1)(i+2)} & g_{(i+1)(i+3)} \cdots & g_{(i+1)m} \\
  \overline{g_{(i+1)(i+2)}} & -1 & g_{(i+2)(i+3)} \cdots & g_{(i+2)m} \\
  
  \vdots  & \vdots  & \vdots  \ddots & \vdots  \\
 \overline{g_{(i+1)(m)}}& \overline{g_{(i+2)m}} & \overline{g_{(i+3)m}} \cdots & -1 \\
	\end{array}\right].$
\end{center}
\end{remark}

 Now observe that the spanning set of $\mathfrak p=(\p_1,\p_2,\ldots,\p_m)$ contains at least one negative point even if $m$-tuple contains all null points. So the spanning set of $\mathfrak p=(\p_1,\p_2,\ldots,\p_m)$  is non-degenerate.

\medskip 
The following proposition follows using similar arguments as in the proof of \cite[Theorem 1]{hof}. It is essentially a consequence of the Witt's extension theorem. 
\begin{prop} \label{gram matrix equivalent} 
Let $p =(p_1,p_2,\ldots,p_m)$ and  $q =(q_1,q_2,\ldots,q_m)$  be the $m$-tuple of distinct points in  $\overline{\hh^n}$. Then $p$ and $q$ are congruent in  $\PSp(n,1)$ if and only if  their associated Gram matrices are equivalent.
\end{prop}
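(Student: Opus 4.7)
The plan is to handle both directions by working with lifts: the forward direction is a direct computation, while the converse will construct an isometry of $\H^{n,1}$ from the Gram-matrix equivalence and extend it via a Witt-type theorem. For the forward direction, I would fix arbitrary lifts $\p_i$, $\q_i$ of $p_i$, $q_i$ and choose $A \in \Sp(n,1)$ covering a $\PSp(n,1)$ element with $A(p_i) = q_i$. Since $A\p_i$ and $\q_i$ project to the same point of $\H\P^n$, there exist $d_i \in \H^{\ast}$ with $A\p_i = \q_i d_i^{-1}$. Because $A$ preserves $\langle\cdot,\cdot\rangle$, a direct computation yields
$$\langle \q_j, \q_i\rangle = \langle (A\p_j)d_j,\,(A\p_i)d_i\rangle = \overline{d_i}\,\langle \p_j,\p_i\rangle\, d_j,$$
so $G(\q) = D^{\ast}G(\p)D$ with $D = \mathrm{diag}(d_1,\ldots,d_m)$, i.e., the Gram matrices are equivalent.

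For the converse, assume $G(\q) = D^{\ast}G(\p)D$. I would first absorb $D$ by rescaling the lifts: set $\q_i' := \q_i d_i^{-1}$, which still projects to $q_i$. A direct check gives $\langle \q_j',\q_i'\rangle = \langle \p_j,\p_i\rangle$, so the new tuple has Gram matrix exactly $G(\p)$. I then define a right $\H$-linear map $T$ on $V := \mathrm{span}_{\H}(\p_1,\ldots,\p_m)$ by $T\p_i = \q_i'$. The delicate point is well-definedness: if $\sum_i \p_i a_i = 0$, pairing with each $\p_j$ gives $\sum_i \langle \p_i,\p_j\rangle a_i = 0$; the equality of Gram matrices then forces $\sum_i \q_i' a_i$ to be orthogonal to every $\q_j'$. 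As the paper records just before the proposition, $V$ contains a negative vector and is therefore non-degenerate, and the same holds for $W := \mathrm{span}_{\H}(\q_1',\ldots,\q_m')$. Non-degeneracy then forces $\sum_i \q_i' a_i = 0$, so $T$ is a well-defined right $\H$-linear isometry between non-degenerate subspaces of $\H^{n,1}$.

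The final step, and the one I expect to be the main obstacle, is extending $T$ to an isometry $\tilde T$ of the whole space $\H^{n,1}$. I would invoke Witt's extension theorem for non-degenerate Hermitian forms over the quaternions: any isometry between non-degenerate subspaces of a non-degenerate Hermitian right $\H$-space extends to an isometry of the ambient space. The extension $\tilde T$ is then an element of $\Sp(n,1)$ with $\tilde T\p_i = \q_i d_i^{-1}$, so it descends to a $\PSp(n,1)$-element sending $p_i\mapsto q_i$ for every $i$. The technical care points are citing the correct noncommutative version of Witt's theorem and keeping the right-$\H$-scaling conventions consistent throughout the rescaling; once that is in place the argument reduces to the computations above together with the non-degeneracy observation already recorded in the paper.
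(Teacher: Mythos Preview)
Your argument is correct and is exactly the standard Gram-matrix/Witt-extension proof; the paper itself supplies no independent argument for this proposition, deferring instead to H\"ofer \cite{hof} and Cunha--Gusevskii \cite{cugu1}, whose proofs proceed along the same lines you outline. The only point requiring external input is the quaternionic Witt extension theorem, which is indeed available for non-degenerate Hermitian forms over division rings, so your plan goes through as written.
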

 
\subsection{Semi-normalised Gram matrix}
\begin{definition}
We will call the matrix in lemma \ref{semii} as  \emph{semi-normalised Gram matrix} with respect to lift $\mathfrak p=(\p_1,\p_2,\ldots,\p_m)$ of $p$.
\end{definition}

The following lemma shows that semi-normalised Gram matrix is just an equivalence class.
\begin{lemma}\label{4.1o}  Suppose that the Gram matrix $G(\mathfrak p)$ is semi-normalised Gram matrix for $p$ with respect to lift $\mathfrak p=(\p_1,\p_2,\ldots,\p_m)$. Then $G({\mathfrak p}')$  is still a semi-normalised Gram matrix with $\mathfrak p'=(\p_1{\lambda_1},\ldots,\p_m{\lambda_m})$ if and only if $\lambda_1=\lambda_2=\ldots=\lambda_m$ and $ {\lambda_i}$ $ \in \Sp(1)$.
\end{lemma}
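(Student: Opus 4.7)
The key computational input will be the transformation formula $g'_{kj} = \bar{\lambda_k}\, g_{kj}\, \lambda_j$, which follows directly from sesquilinearity of the Hermitian form (writing $\langle \p_j \lambda_j, \p_k \lambda_k \rangle = \bar{\lambda_k} \langle \p_j, \p_k \rangle \lambda_j$). My plan is to use this formula to translate each of the semi-normalization conditions on $G(\textbf{p}')$ into a constraint on the tuple $(\lambda_1,\ldots,\lambda_m)$, and then to show that these constraints are together equivalent to the stated conclusion.

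The ``if'' direction is straightforward: assuming $\lambda_k = \lambda \in \Sp(1)$ for all $k$, one gets $g'_{kj} = \bar{\lambda}\, g_{kj}\, \lambda$, so the diagonal values $0$ and $-1$ are preserved (using $\bar{\lambda}(-1)\lambda = -|\lambda|^2 = -1$), each entry $g_{1j} = 1$ becomes $\bar{\lambda}\lambda = 1$, each real-positive $r_{1j}$ is fixed because real numbers are central in $\H$, and $|g'_{23}| = |g_{23}| = 1$.

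For the ``only if'' direction I would extract the constraints in the following order. First, the requirement $g'_{kk} = -|\lambda_k|^2 = -1$ for $k > i$ forces $|\lambda_k| = 1$ at each negative-point index. Next, the equations $g'_{1j} = \bar{\lambda_1}\lambda_j = 1$ for $2 \leq j \leq i$ give $\lambda_j = \bar{\lambda_1}^{-1}$, and substituting into $|g'_{23}| = |\lambda_2|\,|g_{23}|\,|\lambda_3| = 1$ yields $|\lambda_1|^{-2} = 1$, so $|\lambda_1| = 1$ and consequently $\lambda_j = \lambda_1$ for $2 \leq j \leq i$. Finally, for $j > i$, the requirement that $g'_{1j} = r_{1j}\,\bar{\lambda_1}\lambda_j$ be a positive real number, together with $|\bar{\lambda_1}\lambda_j| = 1$ and $r_{1j} > 0$, forces $\bar{\lambda_1}\lambda_j = 1$, hence $\lambda_j = \lambda_1$.

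The only mild subtlety I anticipate is in this last step, where the essential point is that the unique positive real unit quaternion is $1$; here the positivity (and hence nonvanishing) of each $r_{1j}$ is what makes the constraint effective, and this nonvanishing is in turn guaranteed by Proposition~\ref{Cao mixed} since $p_1$ is null and $p_j$ is negative.
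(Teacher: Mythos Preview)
Your proposal is correct and follows essentially the same approach as the paper's own proof: both use the transformation rule $g'_{kj}=\bar{\lambda_k}g_{kj}\lambda_j$ and read off the constraints from the conditions $g'_{1j}=1$ (for $2\le j\le i$), $|g'_{23}|=1$, $g'_{kk}=-1$ (for $k>i$), and positivity of $g'_{1j}$ (for $j>i$), only with a slightly different ordering of the steps. The one cosmetic difference is that you explicitly invoke the fact that the unique positive real unit quaternion is $1$, whereas the paper first deduces $r'_{1j}=r_{1j}$ from the modulus constraint and then cancels.
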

\begin{proof}It follows from ${\langle \p_1\lambda_1,\p_k{\lambda_k}\rangle}=1$ that $\overline{\lambda_k}{\lambda_1} = 1$, for $k= 2,3,
\ldots,i$ as ${\langle \p_1,\p_k\rangle}=1$, and from $\lvert{{\langle \p_2{\lambda_2},\p_3{\lambda_3}\rangle}}\rvert = 1$ that
$\lvert {\overline{\lambda_3}}\rvert{ \lvert \lambda_2 \rvert }=1 $ as $\lvert \langle \p_2,\p_3\rangle \rvert =1$. Thus we have $\vert\lambda_1 \rvert = 1 $ so $\lambda_1 \in \Sp(1)$. So by $\overline{\lambda_k}{\lambda_1} = 1$ for $k= 2,3,\ldots,i$ we have $\lambda_1=\lambda_2=\ldots=\lambda_i$ and $ {\lambda_i}$ $ \in \Sp(1)$.

Also we can see that ${\langle \p_k{\lambda_k},\p_k{\lambda_k}\rangle}=-1$, for $k={i+1},i+2,\ldots,m$ gives that  $\lvert{{\lambda_k\lvert}}=1$ for $k={i+1},i+2,\ldots,m$. Since ${\langle \p_1{\lambda_{1}},\p_j{\lambda_j}\rangle}=r'_{{1}j}$, where $r'_{{1}j}$ are positive real numbers for $j= i+1,i+2,\ldots,m.$ Thus we have $ \overline{ {\lambda_{j}}}~r_{1j}~{\lambda_{1}}=r'_{1j}$, where $\langle \p_1,\p_j\rangle=r_{1j}$. By using the fact $\lvert{{\lambda_i\lvert}}=1$ implies $r'_{1j}=r_{1j}$ for $j= i+1,i+2,\ldots,m.$ As we can commute real numbers with quaternions we get $\overline{ {\lambda_{j}}}{\lambda_{1}}=1$ for $j= i+1,i+2,\ldots,m.$ Thus we have $\lambda_j=\lambda_{1}$ for $j= i+1,i+2,\ldots,m$ with $\lvert{{\lambda_{1}\lvert}}=1$ i.e., $\lambda_{1} \in \Sp(1)$.

Conversely, we can verify that if $\mathfrak p'=(\p_1{\lambda_1},\p_2{\lambda_1},\ldots,\p_m{\lambda_1})$ with $\lvert{{\lambda_1\lvert}}=1$, then $G({\mathfrak p}')$ is matrix of the form  \eqnref{3.2}.
\end{proof}

\begin{remark}\label{rk1} We can represent semi-normalised Gram matrix $G(\mathfrak p)=(g_{kj})$  by\\ $V_G=(r_{1(i+1)},r_{1(i+2)},\ldots,r_{1m},g_{23},g_{24},\ldots,g_{2m},g_{34},\ldots,g_{3m},\ldots,g_{{m-1}m})$ in ${\mathbb{H}}^{t}$, where $\lvert g_{23}\rvert = 1$ and $t=\frac{(m^2-m-2i+2)}{2}$. Action of  $\Sp(1)/\lbrace1,-1\rbrace$ on ${\mathbb{H}}^{t}$ by
  $(\mu, V_G)$ $\rightarrow$ $\overline{\mu}{V_G}\mu$ gives the orbit  $O_{V_G}=\lbrace {\overline{\mu}{V_G} \mu}:\forall \mu \in \Sp(1) \rbrace$.
\end{remark}
\begin{lemma}\label{4.3o} Let $G_1$ and $G_2$ be two semi-normalised Gram matrices represented by $V_{G_1}$ and $V_{G_2}$ resp. Then $G_1$ and $G_2$ are equivalent if and only if $O_{V_{G_1}}=O_{V_{G_2}}$.
\end{lemma}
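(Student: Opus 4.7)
The plan is to treat the two directions of the biconditional separately and to exploit \lemref{4.1o} as the essential rigidity statement. Throughout, recall that in the chosen convention $g_{ij}=\langle\p_j,\p_i\rangle$, so that if $D=\mathrm{diag}(\lambda_1,\ldots,\lambda_m)$ and $\mathbf{p}'=(\p_1\lambda_1,\ldots,\p_m\lambda_m)$, then the entries of $G(\mathbf{p}')=D^{\ast}G(\mathbf{p})D$ satisfy $g'_{ij}=\bar\lambda_i\,g_{ij}\,\lambda_j$.

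For the forward direction, suppose $G_1$ and $G_2$ are equivalent semi-normalized Gram matrices, so there is a nonsingular diagonal $D=\mathrm{diag}(\lambda_1,\ldots,\lambda_m)$ with $G_1=D^{\ast}G_2D$. Replacing the lift $\mathbf{p}$ of the underlying tuple by $\mathbf{p}'=(\p_1\lambda_1,\ldots,\p_m\lambda_m)$ gives exactly this change, so $G(\mathbf{p}')$ is semi-normalized. \lemref{4.1o} then forces $\lambda_1=\cdots=\lambda_m=\mu$ with $\mu\in\Sp(1)$. Consequently every off-diagonal entry transforms as $g^{(1)}_{ij}=\bar\mu\,g^{(2)}_{ij}\,\mu$, so in particular the coordinates of $V_{G_1}$ and $V_{G_2}$ are related by $V_{G_1}=\bar\mu V_{G_2}\mu$, proving $V_{G_1}\in O_{V_{G_2}}$ and hence $O_{V_{G_1}}=O_{V_{G_2}}$.

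For the reverse direction, suppose $O_{V_{G_1}}=O_{V_{G_2}}$, so some $\mu\in\Sp(1)$ realizes $V_{G_1}=\bar\mu V_{G_2}\mu$. Take $D=\mu I$ and set $G:=D^{\ast}G_2 D$; each entry becomes $\bar\mu\,g^{(2)}_{ij}\,\mu$. On the coordinates recorded in $V_G$, this matches $G_1$ by assumption. On the remaining (``fixed'') entries one checks directly: the diagonal entries $0$ and $-1$ satisfy $\bar\mu\cdot 0\cdot\mu=0$ and $\bar\mu\cdot(-1)\cdot\mu=-1$; the entries $g_{1j}=1$ for $2\le j\le i$ satisfy $\bar\mu\cdot 1\cdot\mu=|\mu|^{2}=1$; and the positive reals $r_{1j}=g_{1j}$ for $i+1\le j\le m$ satisfy $\bar\mu\,r_{1j}\,\mu=r_{1j}\bar\mu\mu=r_{1j}$ since real scalars are central. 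Thus $G=G_1$, proving $G_1$ and $G_2$ are equivalent.

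The argument is essentially bookkeeping once \lemref{4.1o} is in hand; there is no real analytic obstacle, only the need to keep track of which entries are encoded in the vector $V_G$ and which are fixed by the semi-normalization, and to verify that the $\Sp(1)$-conjugation action respects both classes simultaneously.
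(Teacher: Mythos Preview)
Your proof is correct and follows the same approach as the paper: the forward direction invokes \lemref{4.1o} to force $D=\mu I$ with $\mu\in\Sp(1)$, and the reverse direction takes $D=\mu I$ and verifies the equality entrywise. One minor note: by \remref{rk1} the positive reals $r_{1j}$ for $i+1\le j\le m$ are actually coordinates of $V_G$ rather than ``fixed'' entries, but since real scalars are central this mislabeling is immaterial to your verification.
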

\begin{proof} Let  $G_1$ and $G_2$ be two semi-normalised Gram matrices of $p$ and $q$ respectively, where $p$ and $q$ are m-tuples of distinct points in $\overline{ \h^n}$  with lifts $\mathfrak p=(\p_1,\p_2,\ldots,\p_m)$ and $\mathfrak q=(\q_1,\q_2,\ldots,\q_m)$ respectively. The equivalence of $G_1$ and $G_2$ implies that there exist non-singular diagonal matrix $D$ such that $G_1=D^{*}G_{2}D$ where  $D=diag(\lambda_1,\ldots,\lambda_m)$. So we have 
\begin{equation}
G_1=(g_{ij})=(\langle \p_j,\p_i\rangle)=(\overline{\lambda_i}\langle \q_j,\q_i\rangle\lambda_j)=(\overline{\lambda_i}{g_{ij}}'\lambda_j)=(\langle \q_j{\lambda_j},\q_i{\lambda_i}\rangle).
\end{equation}\label{x}
Lemma \ref{4.1o} above gives now $\lambda_1=\lambda_2=\ldots=\lambda_m =\mu$ and 
$\mu \in \Sp(1)$.
So from equation \ref{x}, $g_{ij}=\overline{\mu}{g_{ij}}'\mu$, where $\mu$ $ \in \Sp(1)$. So we get $O_{V_{G_1}}=O_{V_{G_2}}$.\\
Conversely, if $O_{V_{G_1}}=O_{V_{G_2}}$, we want to find non-singular diagonal matrix $D$ such that $G_1=D^{*}G_{2}D$, where $D=diag(\lambda_1,\lambda_2,\lambda_3,\ldots,\lambda_m)$. As $V_{G_1}$ lies in $O_{V_{G_1}}$, ${V_{G_1}}={\overline{\mu}{V_{G_2}}\mu}$ for some $\mu$ $ \in \Sp(1)$ and $g_{ij}=\overline{\mu}{g_{ij}}'\mu$. Thus we get $D=(\mu,\mu,\ldots,\mu)$, where $\mu$ $ \in \Sp(1)$.
 \end{proof}
 
\begin{lemma}\label{ov} Let $p$ and $q$ be m-tuples of distinct points in $\overline{ \h^n}$. Then $p$ and $q$ are congruent in  $\PSp(n,1)$ if and only if $O_{V_{G_1}}=O_{V_{G_2}}$, where $V_{G_1}$ and $V_{G_2}$ are represented by semi-normalised Gram matrices associated to $p$ and $q$ respectively.
\end{lemma}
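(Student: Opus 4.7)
The plan is to view this lemma as essentially a composition of two results already established in the preceding subsection, namely \propref{gram matrix equivalent} (which relates $\PSp(n,1)$-congruence to equivalence of Gram matrices) and \lemref{4.3o} (which translates equivalence of semi-normalized Gram matrices into equality of the orbits $O_{V_G}$). So the proof will largely be a linking argument rather than a new computation.

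First I would handle the forward direction: assume $p$ and $q$ are $\PSp(n,1)$-congruent. By \propref{gram matrix equivalent}, the two associated Gram matrices are equivalent. Before invoking \lemref{4.3o} I need to know that each class contains a \emph{semi-normalized} Gram matrix in the sense defined in \eqnref{3.2}; this is guaranteed by the lemma preceding \remref{rk1}, which shows that for any $m$-tuple of distinct points of $\overline{\h^n}$ (with the first $i$ null and the rest negative) one can rescale the lifts so that the resulting Gram matrix is semi-normalized. Thus, after possibly replacing the chosen lifts of $p$ and $q$, the two semi-normalized representatives $G_1$ and $G_2$ are still equivalent, and \lemref{4.3o} then yields $O_{V_{G_1}} = O_{V_{G_2}}$.

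Conversely, assume $O_{V_{G_1}} = O_{V_{G_2}}$. Then \lemref{4.3o} gives a diagonal conjugating matrix $D = \mathrm{diag}(\mu,\ldots,\mu)$ with $\mu \in \Sp(1)$ such that $G_1 = D^{\ast}G_2 D$, so $G_1$ and $G_2$ are equivalent. Applying \propref{gram matrix equivalent} in the reverse direction then produces an element of $\PSp(n,1)$ carrying $p$ to $q$.

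I do not expect any genuine obstacle; the only step that deserves a moment of care is making sure the proposition on $\PSp(n,1)$-congruence applies uniformly whether the points lie on $\partial \h^n$ or on $\h^n$, and that the non-degeneracy hypothesis behind \propref{gram matrix equivalent} is satisfied --- but this is already noted in the paragraph preceding that proposition (the spanning set of $\mathbf{p}$ contains a negative vector, hence is non-degenerate). So the whole argument amounts to concatenating the two earlier lemmas and recording that the semi-normalization procedure is available for every $m$-tuple under consideration.
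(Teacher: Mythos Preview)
Your proposal is correct and follows essentially the same approach as the paper: the paper's own proof simply invokes \propref{gram matrix equivalent} to pass from $\PSp(n,1)$-congruence to equivalence of Gram matrices, and then \lemref{4.3o} to pass from equivalence of the semi-normalized Gram matrices to equality of the orbits $O_{V_{G_1}}=O_{V_{G_2}}$. Your version is in fact slightly more explicit, since you record why a semi-normalized representative exists and why the non-degeneracy hypothesis behind \propref{gram matrix equivalent} holds.
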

\begin{proof}
By Proposition \ref{gram matrix equivalent}, $p$ and $q$ are congruent in  $\PSp(n,1)$ if and only if their associated Gram matrices are equivalent. Let $G_1$ and $G_2$ be the two semi-normalised Gram matrices associated to $p$ and $q$ respectively. We represent them by $V_{G_1}$ and $V_{G_2}$ respectively. Now by using \ref{4.3o}, we get the result $O_{V_{G_1}}=O_{V_{G_2}}$.	
\end{proof}

\subsection{Configuration space of ordered tuples of points}
\medskip Note that $\X_{1j}= g_{23}r_{1j}{g_{2j}}^{-1}$, $\X_{2j}=\overline{g_{23}} g_{2j}  r_{1j}^{-1}$, $\X_{3j}=\overline{g_{23}}^{-1}g_{3j} r_{1j}^{-1}$, $\X_{kj}=\overline{g_{2k}}^{-1}  g_{kj} r_{1j}^{-1}$, where $r_{1j}=1$ if $2\leq j\leq i$. Hence the Gram matrix $G(\p)=(g_{ij})$ can be read off from these invariants.

\medskip Let $p_1,~p_i,~p_j$ be three distinct points of $\overline{ \h^n}=\h^n \cup \partial\h^n$, with lifts $\p_1,~\p_i$ and $\p_j$, respectively. We can write $\langle \p_1, \p_j, \p_i\rangle = {|\langle \p_1, \p_j, \p_i\rangle|}($cos$ \theta_{ij} + u_{ij}~$sin$ \theta_{ij}) = |\langle \p_1, \p_j, \p_i\rangle| e^{u_{ij}\theta_{ij}}$, where $u_{ij} \in \s^2$ is a unit pure quaternion. If $\langle \p_1, \p_j, \p_i\rangle$ is a real number then $u_{ij}$ is undefined, and we shall assume in such cases that$u_{ij}=0$.  Note that 
 $$ \A(p_1,~p_i,~p_j) = \arg (\langle \p_1, \p_j, \p_i\rangle) = \arg (r_{1j}g_{ij}r_{1i}) = \arg(g_{ij})=\theta_{ij}.$$ 
 It follows from \remref{rk1} that the $\Sp(1)$-conjugacy class of $u_{ij}$ (where it is non-zero) is an invariant of the orbit $O_{V_G}$.

\subsection{Proof of \thmref{cong}}
\begin{proof}
Observe that each orbit $O_{V_G}$ is  determined up to $\Sp(1)$ conjugation of  the semi-normalised Gram matrix represented by $V_{G}$. 

\medskip Let $G = (g_{ij})$ be a semi-normalised Gram matrix represented by $V_{G}$ corresponding to a chosen lift 
$\mathfrak p=(\p_1,\p_2,\ldots,\p_m)$ of $p$. We have the following equations:
$$\A_{23}=\A(p_1, p_2, p_3) = \arccos\frac{\Re(-\langle \p_1, \p_3, \p_2\rangle)}{|\langle \p_1, \p_3, \p_2 \rangle|} = \arccos\frac{\Re(-g_{23})}{| g_{23}|}= \arccos{\Re(-g_{23})},$$

$$\X_{1j'}= g_{23}r_{1j'}{g_{2j'}}^{-1}, \X_{2j}=\overline{g_{23}} g_{2j} r_{1j}^{-1}, \X_{3j}=\overline{g_{23}}^{-1} g_{3j} r_{1j}^{-1} , \X_{kj}=\overline{g_{2k}}^{-1}  g_{kj} r_{1j}^{-1}, 
u_0= \frac{\Im(g_{23})}{|\Im(g_{23})|},$$
 for $ (i+1) \leq j' \leq m,~ 4 \leq j \leq m,~ 4 \leq k \leq i, ~k<j$.\\
 Also for the negative points, we have the following equations:
$$d_{i_1j_1} = g_{i_1j_1}g_{j_1i_1} = |g_{i_1j_1}|^2 ,\A_{i_1j_1} = \arg(g_{i_1j_1}), u_{i_1j_1}=\frac{\Im(g_{i_1j_1})}{|\Im(g_{i_1j_1})|}.$$
\medskip So, given the Gram matrix $G$, we can determine $u_0$, $\A_{23}$, $\A_{i_1j_1}$,  $\X_{ij}$, $d_{i_1j_1}$, and $u_{i_1j_1}$ by the above equations. Using \lemref{4.1o} and the fact that the angular invariants $\A_{ij}$, distance invariants $d_{ij}$  are independent of choices of the lifts, it  determines $\A_{23}$, $\A_{i_1j_1}$, $d_{i_1j_1}$ and the $\Sp(1)$ congruence class of $F=(u_{0},\ldots,u_{t}, \X_1,\ldots, \X_{d})$. 

It is known that each nonzero quaternion $q$ has the unique polar form $q = |q|e^{u\theta}$.Thus, if we know $|q|$, $\theta$ and $u$ then we will get quaternion number $q$ uniquely. By the definition of $d_{i_1j_1}$ we have $|g_{i_1j_1}| = \sqrt{d_{i_1j_1}}$.
Also by the definition of the angular invariant we get $\A_{i_1j_1} = \arg(g_{i_1j_1})$.  Thus we can determine the matrix $A=(g_{i_1j_1})$ for negative points by $d_{i_1j_1}$, $\A_{i_1j_1}$ and $u_{i_1j_1}$.

\medskip Conversely, let $x=(\mu u_{0}\bar{\mu}, \ldots, \mu u_{t}\bar{\mu}, \mu \X_1 \bar{\mu},\ldots, \mu \X_{d}\bar{\mu})$ be an element from the $\Sp(1)$ congruence class of $F$ for some $\mu \in \Sp(1)$ with respect to some lift $p$.
By the above equations we have $\mu g_{23}\bar{\mu} = {\cos \A_{23}+ \mu u_{0}\bar{\mu} \sin \A_{23}}$ with $u_{0}=\frac{\Im(g_{23})}{|\Im(g_{23})|}$. Also, 
$$\mu g_{2j} \bar{\mu} = \mu g_{23}\bar{\mu}~\mu  \X_{2j}\bar{\mu},~\mu g_{3j}\bar{\mu} = \bar{\mu}~\mu \overline{g_{23}} \bar{\mu}~\mu \X_{3j},$$ $$
\mu g_{kj}\bar{\mu} =\mu \overline{ g_{2k}}\bar{\mu}~\mu \X_{kj}\bar{\mu}~ = \mu \overline{\X_{2k}}\bar{\mu}~\mu \overline{g_{23}}\bar{\mu}~~\mu \X_{kj}\bar{\mu},$$
$$\mu {g_{i_1j_1}}\bar{\mu} = |{g_{i_1j_1}}|e^{\mu u_{i_1j_1}\bar{\mu}\A_{i_1j_1}}= \sqrt{d_{i_1j_1}}e^{\mu u_{i_1j_1}\bar{\mu}\A_{i_1j_1}}, $$ where ${g_{i_1j_1}}$ are the entries of sub-matrix $A$ of matrix $G(\textbf{p})$ in
\ref{3.2}.
So we have element $V_G=(\mu g_{23}\bar{\mu},\mu g_{24}\bar{\mu},\ldots,\mu g_{2m}\bar{\mu},\mu g_{34}\bar{\mu},\ldots,\mu g_{3m}\bar{\mu},\ldots,\mu g_{{m-1}m}\bar{\mu})$ with $\lvert \mu g_{23}\bar{\mu}\rvert = 1.$
 Now using  \lemref{ov}, we can determine the $\PSp(n,1)$ congruence class of $p$.
\end{proof}

\bigskip 
\begin{ack}
We thank the referee for many comments and suggestions. Thanks are also due to John Parker for useful discussions on a preliminary draft of this paper during a visit to the ICTS Bangalore for participating in the programs  ICTS/ggd2017/11 and  ICTS/SGGS2017/11. We thank the ICTS for the hospitality and support during the visit. 

\medskip Kalane thanks a UGC research fellowship for supporting him throughout this project.  Gongopadhyay acknowledges partial support from SERB-DST MATRICS project: \\ MTR/2017/000355.  

\end{ack}


\end{document}